\newtheorem{theorem}{Theorem}[section]
\newtheorem{lemma}[theorem]{Lemma}
\newtheorem{question}[theorem]{Question}
\theoremstyle{definition}
\newtheorem{definition}[theorem]{Definition}
\newtheorem{remark}[theorem]{Remark}
\newcommand{\zz}{\mathbb{Z}}
\begin{document}
\title[Intrinsic ergodicity and weakenings of specification]{On intrinsic ergodicity and weakenings of the specification property}

\begin{abstract}
Since seminal work of Bowen (\cite{bowen}), it has been known that the specification property implies various useful properties about
a topological dynamical system, among them uniqueness of the measure of maximal entropy (often referred to as intrinsic ergodicity). 
Weakenings of the specification property called almost weak specification and almost specification have been defined and profitably 
applied in various works such as \cite{kwietniaketal}, \cite{marcusmonatshefte}, and \cite{thompson}.

However, it has been an open question (see p. 798 of \cite{CT}) whether either or both of these properties imply intrinsic
ergodicity. We answer this question negatively by exhibiting examples of subshifts with multiple measures
of maximal entropy with disjoint support which have almost weak specification with any gap function $f(n) = O(\ln n)$ or
almost specification with any mistake function $g(n) \geq 4$. We also show some results in the opposite direction, showing that subshifts with
almost weak specification with gap function $f(n) = o(\ln n)$ or almost specification with mistake function $g(n) = 1$
cannot have multiple measures of maximal entropy with disjoint support. 

\end{abstract}

\date{}
\author{Ronnie Pavlov}
\address{Ronnie Pavlov\\
Department of Mathematics\\
University of Denver\\
2280 S. Vine St.\\
Denver, CO 80208}
\email{rpavlov@du.edu}
\urladdr{www.math.du.edu/$\sim$rpavlov/}
\thanks{}
\keywords{Measure of maximal entropy, specification, subshift}
\renewcommand{\subjclassname}{MSC 2010}
\subjclass[2010]{Primary: 37B10; Secondary: 37B40, 37D35}
\maketitle



\section{Introduction}\label{intro}

Entropy is one of the most well-studied invariants in the field of dynamical systems. Entropy can be defined both for measure-theoretic dynamical systems (given by a probability space $(X,\mu)$ and $T$ a $\mu$-preserving self-map of $X$) and for topological dynamical systems (given by a compact topological space $X$ and $T$ a continuous self-map of of $X$). A relation between the two notions of entropy is given by the celebrated Variational Principle, which states that for any topological dynamical system $(X,T)$, the topological entropy is the supremum over all measure-theoretic entropies for Borel measures $\mu$ on $X$ which are preserved by $T$. For this reason, such a measure on $X$ whose entropy achieves this supremum is called a \textbf{measure of maximal entropy}.

It is well-known that expansive topological dynamical systems always have at least one measure of maximal entropy, and symbolic systems/subshifts are some of the best-known examples of expansive topological dynamical systems. A system is called \textbf{intrinsically ergodic} when this measure is unique, and establishing intrinsic ergodicity is a central problem in both ergodic theory and topological dynamics. A common way of proving intrinsic ergodicity is via strong enough specification properties. Specification properties involve combining segments of orbits into a new orbit in various ways; in subshifts, these orbit segments can be represented by words occurring in points of the subshift. We consider two weakenings of the classical specification property in the symbolic setting; the first (called \textbf{almost weak specification}) allows one to combine arbitrary words in the language into a new word in the language if gaps which are ``small'' in comparison to the combined words are placed in between (controlled by a function $f(n) = o(n)$), and the second (called \textbf{almost specification}) allows one to concatenate arbitrary words in the language into a new word in the language if a ``small'' number of letters are allowed to change in each word (controlled by a function $g(n) = o(n)$). See Section~\ref{defs} for formal definitions. 

In \cite{CT}, (p. 798), a question was posed as to whether almost weak specification and/or almost specification implies intrinsic ergodicity. We answer this question negatively, by exhibiting two different examples of subshifts which have almost weak specification and almost specification, respectively, and yet have multiple measures of maximal entropy. 

\begin{theorem}\label{mainex}
For any positive nondecreasing function $f(n)$ with $\liminf_{n \rightarrow \infty} \frac{f(n)}{\ln n} > 0$, there exists a subshift with almost weak specification with gap function $f(n)$ with exactly two ergodic measures of maximal entropy, whose supports are disjoint.
\end{theorem}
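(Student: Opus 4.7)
I will take $X$ to be a subshift on the four-letter alphabet $A = \{0,1\} \sqcup \{2,3\}$, where $\{0,1\}$ and $\{2,3\}$ are two ``types'' (call them L and R). Both full shifts $X_L = \{0,1\}^{\zz}$ and $X_R = \{2,3\}^{\zz}$ will be subsystems of $X$, and the two candidate ergodic MMEs are the Bernoulli $(1/2,1/2)$-measures $\mu_L$ on $X_L$ and $\mu_R$ on $X_R$. Their supports are disjoint, and each has entropy $\log 2$. The work is to add enough ``transitional'' sequences (mixing L-type and R-type symbols) to $X$ to achieve almost weak specification with the prescribed gap $f$, while keeping $h(X) = \log 2$ and preventing any ergodic MME other than $\mu_L$ and $\mu_R$.

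\textbf{The constraint.} Choose a constant $c > 0$ with $f(n) \geq 3c \ln n$ for all large $n$. The subshift $X$ consists of all sequences $x$ such that every maximal monochromatic stretch in $x$ has length at least $c\ln \ell$, where $\ell$ is the length of the adjacent opposite-type stretch (the precise statement must be tuned to keep $X$ closed and shift-invariant, probably by making the condition a window constraint: in every subword of length $n$, the number of transitions is at most $n/(c\ln n)$). The effect is to make transitions ``logarithmically expensive'': each requires a preparation of length $\Theta(\ln)$ on each side.

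\textbf{Verification.} (a) \emph{Almost weak specification:} given $u, v \in L(X)$, construct the gap $w$ to consist of a short extension of $u$'s terminal stretch, a single transition, and a short extension of $v$'s initial stretch; the stretch constraint forces $|w| = O(\ln(|u|+|v|)) \leq f(|u|+|v|)$. (b) \emph{Entropy:} counting shows $|L_n(X)| = 2^{n+o(n)}$ (the $2^n$ comes from two letter choices per position given the stretch pattern, and the subexponential factor bounds the number of stretch patterns), so $h(X) = \log 2$. (c) \emph{Uniqueness:} by ergodicity, any ergodic $\mu$ on $X$ either sits inside $X_L$ (forcing $\mu = \mu_L$ by uniqueness on the full shift), sits inside $X_R$ (forcing $\mu = \mu_R$), or is supported on transitional sequences with a.s.\ infinitely many transitions; for any such $\mu$ the stretch constraint, combined with an entropy estimate, must give $h(\mu) < \log 2$.

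\textbf{Main obstacle.} The heart of the argument is step (c). The logarithmic specification gap is exactly at the boundary: just large enough to permit almost weak specification, and, one hopes, just small enough to prevent transitional ergodic measures from reaching entropy $\log 2$. The estimate must be sharp, because transitions at density zero look ``free'' in the entropy budget, so the structural cost imposed by the stretch constraint has to be exploited precisely -- perhaps by showing that the constraint forces the stretch-length distribution to have heavy enough tails that Shannon--McMillan--Breiman gives a strictly sub-maximal local entropy rate on transitional ergodic orbits. Carrying out this borderline estimate, and verifying it matches the hypothesis $\liminf f(n)/\ln n > 0$ in a sharp way (so that the \emph{converse} direction stated later in the abstract, about $f(n) = o(\ln n)$ forcing intrinsic ergodicity, is exactly complementary), is where the main technical difficulty lies.
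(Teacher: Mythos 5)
Your step (c) is not merely the remaining technical difficulty; with the construction as you have set it up it is actually false (or, in your alternative formulation, step (a) fails instead). In your primary formulation (each maximal monochromatic stretch has length at least $c\ln\ell$, with $\ell$ the adjacent stretch length) the transitional structure costs no entropy: every position of every stretch still has two free letter choices. Since $L \geq c\ln L$ holds for all $L$ above a constant $L_0(c)$, the subshift contains points all of whose stretches have the fixed length $L_0$, i.e.\ transitions occur with positive density for free. Taking the periodic stretch skeleton of spatial period $2L_0$ (uniformly random phase and initial type) with letters i.i.d.\ uniform within each stretch yields an ergodic measure supported on transitional points with entropy exactly $\log 2$, so you obtain extra ergodic measures of maximal entropy and no sharpened Shannon--McMillan--Breiman estimate can rescue the argument. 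In your ``window'' formulation (at most $n/(c\ln n)$ transitions in every window of length $n$), uniqueness indeed becomes easy (every invariant measure has transition frequency $0$, hence lives in $X_L\cup X_R$), but almost weak specification with gaps $O(\ln m)$ fails: concatenating $k$ words of length $m$, each with close to $m/(c\ln m)$ transitions, separated by gaps of length $O(\ln m)$, produces a window of length $n\approx km$ containing about $km/(c\ln m)$ transitions, which exceeds the allowed $n/(c\ln n)$ once $k$ is large; diluting the density would require gaps comparable to $m$, not $\ln m$.

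The missing idea, and the way the paper proceeds, is to make transitions cost \emph{entropy} rather than merely structure. The paper works on the alphabet $\{-N,\dots,-1,0,1,\dots,N\}$ with $N\geq 450$ and separates sign runs by runs of the single zero-entropy symbol $0$ of length at least $2+\log_3(\text{length of the preceding run})$; a transition after a run of length $L$ then forfeits roughly $(\log_3 L)\ln N$ of entropy while the combinatorial freedom gained is only $O(\ln L)$, and the large alphabet makes the loss dominate. Even so, uniqueness is not obtained by a soft argument: the paper runs a measure-theoretic replacement argument (via the generic-word counting lemma and Stirling) proving inductively that any ergodic measure of maximal entropy $\mu$ satisfies $\mu([w0^k*]\mid[w]) < 4(0.5N)^{-0.5(k-1)}$, from which $\mu([0])=0$ follows, forcing $\mu$ to be one of the two uniform Bernoulli measures on the positive or negative full shift. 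Finally, the paper passes through higher-power shifts $X^k$ to convert its natural gap function $2+\lceil\log_3 n\rceil$ into an arbitrary $f$ with $\liminf_{n\to\infty} f(n)/\ln n>0$ (in particular arbitrarily small constants in front of $\ln n$, and positivity at small $n$), a reduction your proposal does not address.
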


\begin{theorem}\label{mainex2}
There exists a subshift with almost specification with mistake function $g(n) = 4$ with two ergodic measures of maximal entropy, whose supports are disjoint.
\end{theorem}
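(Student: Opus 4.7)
The plan is to construct a subshift $X$ on a finite alphabet $A$ that contains two disjoint full-shift subsystems $Y_1, Y_2$ of the same topological entropy $h = \log q$ (for instance $A = A_1 \sqcup A_2$ with $|A_1| = |A_2| = q$ and $Y_i = A_i^{\mathbb{Z}}$), together with a carefully chosen family of ``bridge'' sequences that connect $Y_1$ and $Y_2$.  The two Bernoulli measures $\mu_1,\mu_2$ supported on $Y_1,Y_2$ will be the advertised MMEs with disjoint supports, and the bridges will be designed to just barely satisfy almost specification with $g(n)=4$ without adding new MMEs or inflating $h_{\mathrm{top}}(X)$ above $\log q$.

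First I would write down the local rule defining $X$: roughly, every $x\in X$ is a concatenation of maximal monochromatic blocks of type $1$ or type $2$, and any two adjacent blocks of opposite type are separated by a designated transition word $t=t_1t_2t_3t_4$ of length at most $4$, chosen so that $t$ cannot occur inside any pure $A_i^{\mathbb{Z}}$ configuration. By inspection, $Y_1$ and $Y_2$ are sub-subshifts of $X$, and they are disjoint. Second, I would estimate $|L_n(X)|$ by partitioning length-$n$ words of $X$ according to their number $k$ of transition occurrences: such a word is determined by the $\leq \binom{n-3k}{k}$ placements of the transitions, a choice of initial block type, and the $q^{n-4k}$ free letters in the non-transition positions. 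The aim is to show that, after summing over $k$, $|L_n(X)|$ grows no faster than $q^n$ up to a subexponential factor, so that $h_{\mathrm{top}}(X)=\log q$ and $\mu_1,\mu_2$ are MMEs. For uniqueness of ergodic MMEs, I would argue that any ergodic $\mu$ charging bridges sees transitions with positive density $c>0$ (by the pointwise ergodic theorem), and the rigidity that each transition imposes on four consecutive coordinates yields $h_\mu(X) < \log q$, a contradiction.

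Third, I would verify almost specification with $g(n)=4$. Given $u,v\in L(X)$ of the same type, nothing needs to change; given $u$ ending in type $1$ and $v$ beginning in type $2$ (or conversely), I overwrite at most four letters straddling the boundary so that the fixed transition word $t_1t_2t_3t_4$ appears there, and check that the resulting $u',v'\in L(X)$ satisfy $u'v'\in L(X)$, since the letters immediately outside the modified region lie in blocks of the appropriate type. Multi-word gluing then follows by iterating the two-word case, using the fact that $X$ was designed to admit arbitrarily many well-separated transitions.

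The principal obstacle is the tight balance between Steps 2 and 3: if transitions are too permissive the bridges contribute strictly positive entropy above $\log q$ (as a generating-function computation for a single naive length-$4$ transition word already suggests, producing exponential growth rate exceeding $q$), while if they are too restrictive the four-letter Hamming budget is too small to insert a legal transition between arbitrary $u,v\in L(X)$. Reconciling these constraints---so that the transition pattern is both universally reachable by four letter changes and combinatorially rare in the count of length-$n$ words---is the technical heart of the construction and will likely require a subtler device than a single fixed word, such as a minimum-block-length rule, auxiliary marker symbols, or transitions that encode their own sparsity. Getting the resulting entropy estimate correct, and then verifying that no ergodic MME can be supported on the bridges, will be the most delicate part of the argument.
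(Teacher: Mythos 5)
You have the right skeleton (two disjoint full shifts of equal entropy plus controlled ``bridges'', with the two uniform Bernoulli measures as the advertised MMEs), and you correctly diagnose the central obstruction: a fixed length-$4$ transition word gives roughly $\sum_k \binom{n}{k} q^{n-4k} \approx q^n(1+q^{-4})^n$ words of length $n$, so the bridges strictly inflate the entropy and the Bernoulli measures are no longer maximal. But your proposal stops exactly at the point where the proof actually begins: you write that reconciling the entropy bound with the $4$-letter Hamming budget ``will likely require a subtler device,'' without supplying one. That device is the entire content of the argument, so as it stands this is a genuine gap, not a proof.

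The paper's resolution is to abandon transition words altogether and instead constrain \emph{every} maximal constant-sign block: a block of length $n$ of positive (resp.\ negative) letters must lie in a prescribed set $P_n$ (resp.\ $N_n$) which is simultaneously $2$-spanning for the Hamming distance and of cardinality $M_n = O(N^n/n^2)$ (truncated Hamming-type codes; see Lemmas~\ref{codinglemma}--\ref{codinglemma2.5}). The $2$-spanning property gives almost specification with $g(n)=4$: in the concatenation $w^{(1)}\cdots w^{(k)}$ only the prefix and suffix constant-sign blocks of each $w^{(i)}$ can fail the constraint, and each is repaired with at most $2$ changes. The $O(N^n/n^2)$ density gives the entropy bound via a renewal-type computation: the count of length-$n$ words is controlled by powers of $\sum_t M_t N^{-t}$, and the parameters $(N,\ell)$ are chosen to make this sum strictly less than $1$, whence $|\mathcal{L}_n(X)| = O(nN^n)$ and $h(X)=\ln N$. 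Note that the positional entropy of transition placements, which kills your fixed-word scheme even under a minimum-block-length rule (the surplus comes from \emph{rare} transitions, $k \sim cn$ with $c$ small, where $\binom{n}{cn}q^{-4cn} \gg 1$), is absorbed here because the penalty $M_t N^{-t}$ is paid per block rather than a flat $q^{-4}$ per transition. Your uniqueness-of-MME step is also unnecessary for the statement as given: once $h(X)=\ln N$, the two Bernoulli measures are already MMEs with disjoint supports, which is all that is claimed.
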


It is particularly surprising that even boundedness of the mistake function for systems with almost specification does not imply uniqueness of the measure of maximal entropy. We also prove some results in the opposite direction, proving that if $f(n)$ and/or $g(n)$ grows extremely slowly, then the subshift cannot have two measures of maximal entropy with disjoint supports. This result in some sense precludes a ``strong nonuniqueness'' of the measures of maximal entropy.

\begin{theorem}\label{mainthm}
If a subshift has almost weak specification with gap function $f(n)$ where $\liminf_{n \rightarrow \infty} \frac{f(n)}{\ln n} = 0$, then it cannot have two measures of maximal entropy with disjoint support.
\end{theorem}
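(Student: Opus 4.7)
The plan is to argue by contradiction: suppose $X$ admits two ergodic measures of maximal entropy $\mu_1, \mu_2$ with disjoint supports $X_1, X_2$. By the variational principle and the fact that each $\mu_i$ is an MME supported in $X_i$, one has $h_{\mathrm{top}}(X_1) = h_{\mathrm{top}}(X_2) = h := h_{\mathrm{top}}(X)$. Compactness of $X$ together with disjointness of the closed invariant sets $X_1, X_2$ yields some $N_0$ with $L_n(X_1) \cap L_n(X_2) = \emptyset$ for all $n \geq N_0$, and Fekete's subadditivity lemma applied inside each $X_i$ gives $|L_n(X_i)| \geq e^{nh}$ for every $n$.

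The combinatorial heart of the proof will use almost weak specification to produce many words in $L(X)$ by alternating blocks between $L_n(X_1)$ and $L_n(X_2)$. Fix $n \geq N_0$ and $k \geq 1$. For any type sequence $(s_1, \ldots, s_k) \in \{1,2\}^k$ and any choice $w_i \in L_n(X_{s_i})$, almost weak specification supplies gaps $g_1, \ldots, g_{k-1}$ with $|g_i| \leq f(n)$ such that $w_1 g_1 w_2 \cdots g_{k-1} w_k \in L(X)$. The disjointness of $L_n(X_1)$ and $L_n(X_2)$ ensures that distinct data $((s_i), (w_i))$ produce distinct concatenations (after a technical step fixing gap lengths, e.g.\ by padding to exactly $f(n)$ whenever possible). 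Pigeonholing over the possible lengths $N \in [kn,\, kn+(k-1)f(n)]$ then yields
\[
\max_{N} |L_N(X)| \;\geq\; \frac{(|L_n(X_1)|+|L_n(X_2)|)^k}{(k-1)f(n)+1} \;\geq\; \frac{2^k e^{knh}}{k f(n)+1}.
\]

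The final step is to compare this lower bound with the entropy upper bound $|L_N(X)| \leq e^{N(h+\varepsilon)}$, valid for $N$ sufficiently large and any $\varepsilon > 0$. Working along a subsequence $n_j$ with $f(n_j)/\ln n_j \to 0$, and choosing $k = k_j$ growing suitably with $n_j$, the $2^{k_j}$ gain from alternation should outweigh the $(k_j-1)f(n_j)h$ loss from the gaps, forcing the contradiction. The main obstacle will be extracting the required logarithmic sharpness: a naive combination of these inequalities only produces a constant lower bound $f(n) \geq \ln 2 / h$, so closing the gap to $\Omega(\ln n)$ appears to need one of the following refinements: iterating the alternation at multiple scales (so that the linear-in-$k$ gain accumulates into a polynomial-in-$n$ gain), restricting to Shannon--McMillan typical words inside each $X_i$ to upgrade $|L_n(X_i)| \geq e^{nh}$ to a polynomially larger quantity, or invoking a sharper upper bound on the subexponential correction $|L_N(X)|/e^{Nh}$ specific to almost weakly specified subshifts. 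Threading these estimates carefully against the subsequence on which $f$ is sublogarithmic will be the delicate part of the argument.
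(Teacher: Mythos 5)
Your setup (disjoint languages $\mathcal{M}_n,\mathcal{N}_n$ of $\mu$- and $\nu$-positive words, the bound $|\mathcal{L}_n(X_i)|\geq e^{nh}$, disjointness of $N$-letter subwords, and gluing with AWS) matches the paper, but the proof as proposed has a genuine gap, and you have in fact located it yourself: alternating $k$ blocks of length $n$ only multiplies the count by $2^k$ against a loss of order $e^{(k-1)f(n)h}$, which after dividing by $k$ yields only the constant bound $f(n)\geq \ln 2/h$. None of the three refinements you list is carried out, and two of them cannot work as stated: upgrading $|\mathcal{L}_n(X_i)|\geq e^{nh}$ to something polynomially larger is false in general (for a full shift the count is exactly $e^{nh}$), and the sharper upper bound alone (which the paper does prove, in the form $\ln|\mathcal{L}_n(X)|\leq (n+f(n))h(X)$, by using AWS to self-concatenate $t$ copies of length-$n$ words) still leaves you with the same constant bound when plugged into the $2^k$-gain computation. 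So the multi-block alternation scheme cannot be salvaged by bookkeeping; a different counting idea is needed.

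The missing idea in the paper is to use only one transition but to vary its \emph{position}: for a fixed target length $n$, for each $i\in[0,n-f(n)]$ that is a multiple of $f(n)+N$, glue a word $w\in\mathcal{M}_i(X)$ to a word $v\in\mathcal{N}_{n-f(n)-i}(X)$ across a gap of length exactly $f(n)$. The map $(i,w,v)\mapsto wuv$ is injective because two admissible positions differ by at least $f(n)+N$, so a coincidence would force an $N$-letter word lying in both $\mathcal{M}_N(X)$ and $\mathcal{N}_N(X)$. This produces at least $\frac{n-f(n)}{f(n)+N}\,e^{(n-f(n))h(X)}$ words in $\mathcal{L}_n(X)$, i.e.\ a gain that is \emph{polynomial in $n$} (about $n/f(n)$ choices of transition point) rather than linear in the number of blocks. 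Combined with the self-concatenation upper bound $\ln|\mathcal{L}_n(X)|\leq (n+f(n))h(X)$ — this is exactly your third refinement, and it is essential, since an $e^{n\varepsilon}$ correction would swamp a $\ln n$-size gain — one gets
\[
2f(n)\,h(X)+\ln\bigl(f(n)+N\bigr)\;\geq\;\ln\bigl(n-f(n)\bigr),
\]
which is contradicted along any subsequence with $f(n_k)/\ln n_k\to 0$. In short: your framework and your diagnosis of the obstruction are correct, but the decisive device — counting transition locations to turn disjointness of supports into a polynomial-in-$n$ surplus of words — is absent, and without it (or an equivalent substitute) the argument stops at a constant lower bound on $f$ and does not prove the theorem.
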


\begin{theorem}\label{mainthm2} 
If a subshift has almost specification with mistake function $g(n) = 1$, then it cannot have two measures of maximal entropy with disjoint support.
\end{theorem}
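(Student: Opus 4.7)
I would argue by contradiction. Suppose there exist distinct ergodic measures of maximal entropy $\mu_1,\mu_2$ with disjoint supports $X_1,X_2$; write $h:=h_{\mathrm{top}}(X)$, so that $h(\mu_i)=h_{\mathrm{top}}(X_i)=h$ for $i=1,2$.

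The first step is to establish a Hamming separation between the two languages: $d_H(L_N(X_1),L_N(X_2))\to\infty$ as $N\to\infty$. If not, pick a constant $c$, a sequence $n_k\to\infty$, and words $u_k\in L_{n_k}(X_1)$, $v_k\in L_{n_k}(X_2)$ with $d_H(u_k,v_k)\le c$. Extend to bi-infinite $x_k\in X_1$, $y_k\in X_2$ and take a weak-$\ast$ subsequential limit $\nu$ of the empirical joinings $\frac{1}{n_k}\sum_{i=0}^{n_k-1}\delta_{(\sigma^i x_k,\,\sigma^i y_k)}$ on $X_1\times X_2$. Then $\nu$ is shift invariant and $\int \mathbf{1}_{\{x_0\neq y_0\}}\,d\nu=0$, which by shift invariance forces $\nu(\{(x,y):x=y\})=1$. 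But the support of $\nu$ lies in $X_1\times X_2$ and $X_1\cap X_2=\emptyset$, a contradiction. The same argument with $c$ replaced by $\epsilon n_k$ and $\epsilon\to 0$ in fact gives the linear lower bound $d_H(L_N(X_1),L_N(X_2))\ge\epsilon N$ for some $\epsilon>0$ and all large $N$.

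Fix $N$ with $d_H(L_N(X_1),L_N(X_2))\ge 3$ and let $A,B\subseteq\mathcal{A}^N$ be the Hamming-$1$ neighborhoods of $L_N(X_1),L_N(X_2)$; these are disjoint. For every pattern $\sigma\in\{1,2\}^k$ and every tuple $(u_1,\dots,u_k)$ with $u_i\in L_N(X_{\sigma_i})$, almost specification with $g\equiv 1$ produces $u_i'$ at Hamming distance at most $1$ from $u_i$ with $w:=u_1'u_2'\cdots u_k'\in L_{kN}(X)$. The disjointness of $A$ and $B$ makes $\sigma$ uniquely recoverable from $w$, so distinct patterns give distinct outputs. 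Dividing the count of inputs by the maximum per-block fiber $r_N:=\max_{w\in\mathcal{A}^N}|\{u\in L_N(X_1)\cup L_N(X_2):d_H(u,w)\le 1\}|$ gives
$$
|L_{kN}(X)|\;\ge\;\frac{(|L_N(X_1)|+|L_N(X_2)|)^k}{r_N^k},
$$
and using the Fekete bound $|L_N(X_i)|\ge e^{Nh}$, taking $k$-th roots and letting $k\to\infty$ rearranges to $h_{\mathrm{top}}(X)\ge h+(\log 2-\log r_N)/N$.

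The crux is therefore to show $r_N<2$ for some suitable $N$, producing the contradictory $h_{\mathrm{top}}(X)>h$, and this is the main obstacle since the naive bound $r_N\le 1+N(|\mathcal{A}|-1)$ is much too weak. Closing the gap should combine two features specific to $g\equiv 1$: the linear-in-$N$ between-language separation from the first step, which forces each fiber to draw from only one of $L_N(X_1)$ and $L_N(X_2)$; and the extreme thinness of Hamming-$1$ balls, allowing one to bound the within-language contribution to the fiber (likely by passing to a favorable subsequence of $N$, or by restricting to a sufficiently sparse subcode of $L_N(X_i)$ that still carries near-maximal entropy). The fact that this delicate balance just fails once $g$ becomes larger is exactly what allows the $g\equiv 4$ counterexample in Theorem~\ref{mainex2}.
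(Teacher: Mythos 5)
Your setup (contradiction, Fekete bound $|L_N(X_i)|\ge e^{Nh}$, gluing blocks with one mistake each, recovering the block pattern from disjointness of the two supports) matches the spirit of the paper's argument, and your first step, showing the Hamming distance between the two languages grows, is correct though more than is needed (the paper only uses that for $n\ge N$ no word can have positive measure for both $\mu$ and $\nu$, so an unchanged $N$-letter subword identifies which language a block came from). But the proof has a genuine gap exactly where you flag it, and the route you sketch for closing it cannot work as quantified. You need $r_N<2$, i.e.\ $r_N=1$, which means the set of blocks you concatenate must be $3$-separated in Hamming distance; the full languages $L_N(X_i)$ are never $3$-separated in any interesting case, so you must thin to a $3$-separated subcode $S_N\subset L_N(X_i)$. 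The sphere-packing bound then forces $|S_N|\le |L_N(X_i)|/(1+N(|A|-1))$, a loss of order $N$ per block, while your gain per block is only the factor $2$ from choosing which language to use. With fixed block length $N$ the resulting estimate is $h(X)\ge h+\frac{1}{N}\bigl(\ln 2-\ln(cN|A|^2)\bigr)$, whose correction term is negative for all large $N$; no "favorable subsequence of $N$" or "near-maximal-entropy subcode" can rescue this, since the loss is necessarily polynomial in $N$ and the gain is a bounded constant per block.

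The paper's proof supplies the missing mechanism. It does construct $3$-separated subsets $\mathcal{M}'_n\subset\mathcal{M}_n(X)$, $\mathcal{N}'_n\subset\mathcal{N}_n(X)$ with only polynomial loss, $|\mathcal{M}'_n|,|\mathcal{N}'_n|\ge e^{nh(X)}/(4n|A|^2)$, via a parity-check coding lemma (Lemma~\ref{codinglemma3}) -- this is essentially the subcode you anticipate -- but the exponential surplus does not come from a binary choice per block. Instead one concatenates blocks of \emph{variable} lengths $3Nn_i$ with $n_i\in\{1,\dots,t\}$, alternating between $\mathcal{M}'$ and $\mathcal{N}'$; the alternation plus the disjointness of the two word sets at scale $N$ makes the length pattern (and then, by $3$-separation, each block) uniquely recoverable after the single-letter mistakes, and the gain is the number of length patterns. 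The count per block pattern is $\prod_i \frac{e^{3Nn_ih(X)}}{12N|A|^2 n_i}$, and summing over all patterns produces the factor $\bigl(\sum_{j=1}^{t}\frac{1}{12N|A|^2 j}\bigr)^{k}$, which exceeds $1$ per block once $t$ is large because the harmonic series diverges; this is what finally beats the polynomial subcode loss and yields $h(X)>h(X)$. So the missing idea in your proposal is precisely this variable-length (unique-parsing) counting device; without it, the fixed-length, factor-two scheme provably falls short.
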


We have then completely answered the question of whether almost weak specification for a particular gap function $f(n)$ can coexist with multiple measures of maximal entropy with disjoint supports, and leave open only the case $1 \leq g(n) \leq 4$ for the corresponding question for almost specification. 
This suggests that for both almost weak specification and almost specification, there is a ``phase transition'' 
significantly below $n$ for the relevant gap or mistake function for how the property influences the measures of maximal entropy.
It is still plausible that an extremely slow growth rate for $f(n)$ and/or $g(n)$ may imply uniqueness of the measure of maximal entropy, but we do not know whether this is true or not.

\begin{question}\label{mainq}
Do there exist positive functions $F(n)$ and/or $G(n)$ so that almost weak specification with a gap function $f(n) \leq F(n)$ and/or almost specification with a mistake function $g(n) \leq G(n)$ forces intrinsic ergodicity?
\end{question}

We quickly summarize the structure of the paper. Section~\ref{defs} gives formal definitions of all relevant concepts, Section~\ref{AWS} gives proofs of our results concerning almost weak specification (namely Theorems~\ref{mainex} and~\ref{mainthm}), and Section~\ref{AS} gives proofs of our results concerning almost specification (namely Theorems~\ref{mainex2} and~\ref{mainthm2}).

\begin{remark}
We would like to point out that the question of whether almost weak/almost specification implies intrinsic ergodicity has been independently answered negatively by Kwietniak, Oprocha, and Rams (\cite{KOR}). They also prove other interesting results about implications of and relations between almost weak specification, almost specification, and the so-called Climenhaga-Thompson decomposition from \cite{CT}.
\end{remark}

\section{Definitions and preliminaries}\label{defs}

\begin{definition}
For any finite alphabet $A$, the \textbf{full shift} over $A$ is the set $A^{\zz} = \{\ldots x_{-1} x_0 x_1 \ldots \ : \ x_i \in A\}$, which is viewed as a compact topological space with the (discrete) product topology.
\end{definition}

\begin{definition}
A \textbf{word} over $A$ is a member of $A^{\{i,i+1,\ldots,j\}}$ for some $i<j$, whose \textbf{length} $j-i+1$ is denoted by $|w|$. The set $\bigcup_{i,j \in \zz, i<j} A^{\{i,i+1,\ldots,j\}}$ of all words over $A$ is denoted by $A^*$. For any $n$, we use $A^n$ to denote the set $A^{\{1,\ldots,n\}}$.
\end{definition}

\begin{definition}
The \textbf{shift action}, denoted by $\{\sigma^n\}_{t \in \zz}$, is the $\zz$-action on a full shift $A^{\zz}$ defined by $(\sigma^n x)_m = x_{m+n}$ for $m,n \in \zz$. 
\end{definition}

\begin{definition}
A \textbf{subshift} is a closed subset of a full shift $A^{\zz}$ which is invariant under the shift action, which is a compact space with the induced topology from $A^{\zz}$.
\end{definition}

The single shift $\sigma := \sigma^1$ is an automorphism on any subshift, and so for any subshift $X$, $(X,\sigma)$ is a topological dynamical system. An alternate definition for a subshift is in terms of a list of forbidden words; for any set $\mathcal{F} \subset A^*$, one can define the set $X(\mathcal{F}) := \{x \in A^{\zz} \ : \ x_i x_{i+1} \ldots x_j \notin \mathcal{F} \ \forall i,j \in \zz, i<j\}$. It is well known that any $X(\mathcal{F})$ is a subshift, and all subshifts are representable in this way.

\begin{definition}
The \textbf{language} of a subshift $X$, denoted by $\mathcal{L}(X)$, is the set of all words which appear in points of $X$. For any $n \in \zz$, $\mathcal{L}_n(X) := 
\mathcal{L}(X) \cap A^n$, the set of words in the language of $X$ with length $n$. 
\end{definition}

In the previous definition, we dealt only with words from $A^n$ rather than $A^{\{i,\ldots,j\}}$ for arbitrary $i < j$; this is because any word in $A^{\{i,\ldots,j\}}$ can clearly be thought of as a word in $A^{j-i+1}$ by simply shifting it. We will generally consider two words to be the same if they are shifts of each other. 

\begin{definition}
For any subshift and word $w \in \mathcal{L}_n(X)$, the \textbf{cylinder set} $[w]$ is the set of all $x \in X$ with $x_1 x_2 \ldots x_n = w$.  
\end{definition}

\begin{definition}
For any subshift $X \subset A^{\zz}$ and any $k \in \mathbb{N}$, the \textbf{$k$th higher-power shift} associated to $X$, denoted $X^k$, is a subshift with alphabet $\mathcal{L}_k(X)$ defined by the following rule: $y \in (\mathcal{L}_k(X))^{\mathbb{Z}}$ is an element of $X^k$ if and only if the point $x$ defined by concatenating the ``letters'' of $y$ is in $X$. 
(Formally, $\forall n \in \zz$, the $n$th letter of $x$ is defined to be the $(n \pmod k)$th letter of $y_{\lfloor n/k \rfloor}$.)
\end{definition}

It is well-known that the dynamical systems $(X^k, \sigma)$ and $(X, \sigma^k)$ are topologically isomorphic.

\begin{definition}\label{topent}
The \textbf{topological entropy} of a subshift $X$ is
\[
h(X) := \lim_{n \rightarrow \infty} \frac{1}{n} \ln |\mathcal{L}_n(X)|.
\]
\end{definition}

We also need some definitions from measure-theoretic dynamics; all measures considered in this paper will be Borel probability measures on a full shift $A^{\mathbb{Z}}$.

\begin{definition}
A measure $\mu$ on $A^{\mathbb{Z}}$ is {\bf ergodic} if any measurable set $C$ which is shift-invariant, meaning $\mu(C \triangle \sigma C) = 0$, has measure $0$ or $1$. 
\end{definition}

Not all measures are ergodic, but a well-known result called the ergodic decomposition shows that any non-ergodic measure can be written as an ``average'' (formally, an integral) of ergodic measures; see 
Chapter 6 of \cite{walters} for more information. One application of ergodic measures comes from Birkhoff's pointwise ergodic theorem, stated here only for the case of ergodic $\mu$ on a full shift $A^{\zz}$.

\begin{theorem}{\rm (Birkhoff's pointwise ergodic theorem)}\label{birkhoff}
For any ergodic measure $\mu$ on a subshift $X$ and any $f \in L^1(A^{\zz},\mu)$,
\[
\lim_{n \rightarrow \infty} \frac{1}{2n+1} \sum_{i=-n}^n f(\sigma^i x) \underset{\mu{\rm -a.e.}}{\rightarrow} \int f \ d\mu.
\]
\end{theorem}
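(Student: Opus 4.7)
The plan is to derive Birkhoff's pointwise ergodic theorem from the \textbf{Maximal Ergodic Theorem} via the standard route. The statement above uses symmetric averages $\frac{1}{2n+1}\sum_{i=-n}^{n}$, but since $\mu$ is $\sigma$-invariant it is also $\sigma^{-1}$-invariant, so it suffices to prove the one-sided version $\frac{1}{n}\sum_{i=0}^{n-1} f(\sigma^i x) \to \int f \, d\mu$ $\mu$-a.e.\ and then combine the forward and backward averages.

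First I would establish the Maximal Ergodic Theorem: for $f \in L^1(\mu)$, setting $S_n f = \sum_{i=0}^{n-1} f \circ \sigma^i$ and $E = \{x : \sup_{n \geq 1} S_n f(x) > 0\}$, one has $\int_E f \, d\mu \geq 0$. The proof is a telescoping argument built on the auxiliary quantity $M_N f := \max_{0 \leq n \leq N} S_n f$ (with $S_0 f := 0$) and the pointwise inequality $M_N f \circ \sigma + f \geq S_n f$ for each $1 \leq n \leq N$, integrated on $\{M_N f > 0\}$ and using $\sigma$-invariance of $\mu$ to cancel the $M_N f \circ \sigma$ term, followed by taking $N \to \infty$.

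Next I would set $\bar{f}(x) = \limsup_n \frac{1}{n} S_n f(x)$ and $\underline{f}(x) = \liminf_n \frac{1}{n} S_n f(x)$. A short Borel--Cantelli argument using $f \in L^1$ and $\sigma$-invariance shows $\frac{1}{n} f(\sigma^n x) \to 0$ $\mu$-a.e., which gives $\bar{f} \circ \sigma = \bar{f}$ and $\underline{f} \circ \sigma = \underline{f}$ $\mu$-a.e. By ergodicity, $\bar{f}$ and $\underline{f}$ are each $\mu$-a.e.\ constant. Applying the Maximal Ergodic Theorem to $f - \alpha$ on the $\sigma$-invariant set $\{\bar{f} > \alpha\}$ yields $\alpha\, \mu(\{\bar{f} > \alpha\}) \leq \int_{\{\bar{f} > \alpha\}} f \, d\mu$ for every $\alpha \in \mathbb{R}$; letting $\alpha$ range over a countable dense set forces $\bar{f} \leq \int f \, d\mu$ $\mu$-a.e., and applying the same inequality to $-f$ gives $\underline{f} \geq \int f \, d\mu$, forcing $\bar f = \underline f = \int f \, d\mu$.

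The main obstacle is the Maximal Ergodic Theorem itself: the combinatorial identity $M_N f \circ \sigma + f \geq S_n f$ on $\{M_N f > 0\}$ is the essential nontrivial ingredient, while the rest of the argument is book-keeping. A minor technicality is the almost-sure shift-invariance of $\bar f$ and $\underline f$, which rests on the Borel--Cantelli step above; once these pieces are in place, the two-sided formulation follows immediately by applying the one-sided result to both $\sigma$ and $\sigma^{-1}$ and averaging.
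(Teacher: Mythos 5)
Your sketch is correct, but there is nothing in the paper to compare it against: Theorem~\ref{birkhoff} is quoted as a classical result (Birkhoff's theorem) and the paper gives no proof, relying on standard references such as \cite{walters}. What you outline is exactly the standard textbook argument: Garsia's proof of the maximal ergodic theorem via $M_N f \circ \sigma + f \geq S_n f$ on $\{M_N f > 0\}$ (the cancellation step really uses $\int_{\{M_Nf>0\}} M_Nf\,d\mu = \int (M_Nf)^+\,d\mu \geq \int (M_Nf)^+\circ\sigma\,d\mu \geq \int_{\{M_Nf>0\}} M_Nf\circ\sigma\,d\mu$, which is the book-keeping you allude to), then the corollary $\alpha\,\mu(\{\bar f>\alpha\})\leq\int_{\{\bar f>\alpha\}}f\,d\mu$ applied to $f$ and $-f$, with ergodicity collapsing $\bar f$ and $\underline f$ to the constant $\int f\,d\mu$, and finally the symmetric averages handled by splitting into forward and backward one-sided averages for $\sigma$ and $\sigma^{-1}$ (which share the same invariant sets, hence the same ergodic measures). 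One small simplification: the Borel--Cantelli step is unnecessary, since $S_n f(\sigma x) = S_{n+1}f(x) - f(x)$ gives $\tfrac1n S_nf(\sigma x) = \tfrac{n+1}{n}\cdot\tfrac{1}{n+1}S_{n+1}f(x) - \tfrac{f(x)}{n}$, and the correction term $f(x)/n$ vanishes wherever $f$ is finite, so $\bar f\circ\sigma=\bar f$ and $\underline f\circ\sigma=\underline f$ hold $\mu$-a.e.\ without estimating $f(\sigma^n x)/n$. With that noted, your plan assembles into a complete and correct proof of the stated theorem.
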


\begin{definition}\label{measent}
For any measure $\mu$ on a full shift $A^{\zz}$, the \textbf{measure-theoretic entropy} of $\mu$ is
\[
h(\mu) := \lim_{n \rightarrow \infty} \frac{-1}{n}  \sum_{w \in A^n} \mu([w]) \ln \mu([w]),
\]
where terms with $\mu([w]) = 0$ are omitted from the sum.
\end{definition}

In Definitions~\ref{topent} and \ref{measent}, a standard subadditivity argument shows that the limits can be replaced by infimums; i.e. for any $n$, 
$h(X) \leq \frac{1}{n} \ln |\mathcal{L}_n(X)|$ and $h(\mu) \leq \frac{-1}{n} \sum_{w \in A^n} \mu([w]) \ln \mu([w])$. This implies the following fact.

\begin{lemma}\label{supportcount}
For any measure $\mu$ on a shift space $X$,
\[
|\{w \in \mathcal{L}_n(X) \ : \ \mu([w]) > 0\}| \geq e^{nh(\mu)}.
\]
\end{lemma}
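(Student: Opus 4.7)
The plan is to derive the bound directly from the subadditive upper estimate on $h(\mu)$ that the paper notes just after Definition~\ref{measent}, namely
\[
h(\mu) \;\leq\; \frac{-1}{n} \sum_{w \in A^n} \mu([w]) \ln \mu([w])
\]
for every $n \in \mathbb{N}$, combined with the elementary fact that the Shannon entropy of a probability distribution with finite support is bounded above by the logarithm of the cardinality of that support.

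First I would restrict the sum on the right-hand side to those $w$ for which $\mu([w]) > 0$; by the stated convention, zero-probability words contribute nothing. Moreover, since $\mu$ is supported on the shift space $X$, any word $w \in A^n \setminus \mathcal{L}_n(X)$ has $[w] \cap X = \varnothing$ and therefore $\mu([w]) = 0$. Hence the sum is effectively indexed by $S_n := \{ w \in \mathcal{L}_n(X) : \mu([w]) > 0\}$, and
\[
n h(\mu) \;\leq\; -\sum_{w \in S_n} \mu([w]) \ln \mu([w]).
\]

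Next, applying Jensen's inequality to the concave function $-x \ln x$ on the uniform distribution over $S_n$ (equivalently, the standard fact that among all probability vectors of a given support size, entropy is maximized by the uniform one), I obtain
\[
-\sum_{w \in S_n} \mu([w]) \ln \mu([w]) \;\leq\; \ln |S_n|.
\]
Combining the two displayed inequalities yields $n h(\mu) \leq \ln |S_n|$, which after exponentiation is exactly the claim $|S_n| \geq e^{n h(\mu)}$.

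There is really no obstacle here: the lemma is a clean consequence of the subadditive infimum characterization of $h(\mu)$ (already recorded in the excerpt) together with the maximum-entropy property of the uniform distribution on a finite set. The only thing worth spelling out carefully is the reduction of the sum from $A^n$ to $\mathcal{L}_n(X)$, which uses that $\mu$ is concentrated on $X$.
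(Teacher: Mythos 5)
Your proof is correct and follows essentially the same route as the paper: both use the subadditive bound $h(\mu) \leq \frac{-1}{n}\sum_{w}\mu([w])\ln\mu([w])$ together with the fact that entropy of a probability vector is at most the logarithm of its support size. Your explicit justification of the reduction of the sum to $\mathcal{L}_n(X)$ is a fine (implicit in the paper) detail.
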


\begin{proof}

Choose any such $X$, $\mu$, and $n$, and denote the set in the lemma by $S$. It is easily checked that for any probability vector $(x_1, \ldots, x_n)$, $-\sum_{i=1}^n x_i \ln x_i \leq \ln n$, and equality is achieved if and only if all $x_i$ are equal to $\frac{1}{n}$ (see \cite{walters}, Corollary 4.2.1 for a proof). Therefore, 
\[
h(\mu) \leq \frac{-1}{n} \sum_{w \in A^n} \mu([w]) \ln \mu([w]) \leq \frac{\ln |S|}{n}.
\]

This implies that $|S| \geq e^{nh(\mu)}$.

\end{proof}

We also need the following fact, whose proof can be found as Lemma 4.8 in \cite{pavlovperturb}. 

\begin{lemma}\label{genericcount}
For any subshift $X$, any ergodic measure $\mu$ on $X$, any finite
set of words $w_i \in \mathcal{L}_{n_i}(X)$ for $1 \leq i \leq j$, any $k \in \mathbb{N}$, and any
$\epsilon > 0$, define the set $C_{k,\epsilon,w_1,\ldots,w_j}(X)$ to be the set of all $w \in \mathcal{L}_k(X)$
which have between $k(\mu([w_i]) - \epsilon)$ and $k(\mu([w_i]) + \epsilon)$ occurrences of $w_i$ for each $i$.
Then,
\[
\liminf_{k \rightarrow \infty} \frac{\ln |C_{k,\epsilon,w_1,\ldots,w_j}(X)|}{k} \geq h(\mu).
\] 
\end{lemma}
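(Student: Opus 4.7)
The plan is to combine Birkhoff's pointwise ergodic theorem (Theorem~\ref{birkhoff}) with the subadditive lower bound $h(\mu) \leq \frac{-1}{k}\sum_{w \in A^k}\mu([w])\ln\mu([w])$ observed immediately after Definition~\ref{measent}. Birkhoff will force almost all of the $\mu$-mass on $\mathcal{L}_k(X)$ to sit on words lying in $C_k := C_{k,\epsilon,w_1,\ldots,w_j}(X)$, and the subadditive bound, combined with the maximum-entropy inequality
\[
-\sum_{i}p_i \ln p_i \;\leq\; -P\ln P + P \ln |S|
\]
for a nonnegative vector $(p_i)$ of total mass $P$ supported on a set of size $|S|$ (a direct generalization of the $\ln n$ bound used in the proof of Lemma~\ref{supportcount}), will then force $|C_k|$ to be essentially at least $e^{k h(\mu)}$.

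First I will apply Theorem~\ref{birkhoff} to each indicator function $\mathbf{1}_{[w_i]} \in L^1(X,\mu)$ to conclude that for $\mu$-a.e.\ $x$, the number of $n \in \{-N,\ldots,N\}$ with $\sigma^n x \in [w_i]$ equals $(2N+1)\mu([w_i]) + o(N)$. Reinterpreting this count as occurrences of $w_i$ entirely inside the finite word $x_{-N}\ldots x_N$ costs at most $n_i - 1 = O(1)$ positions at the boundary, which is negligible after dividing by $k = 2N+1$. Shift-invariance of $\mu$ together with bounded convergence will then give $p_k := \mu\bigl(\bigcup_{w \in C_k}[w]\bigr) \to 1$ as $k \to \infty$. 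Next, splitting $\mathcal{L}_k(X) = C_k \sqcup D_k$, setting $Q_k := 1 - p_k$, and applying the maximum-entropy inequality separately to the $\mu$-masses on $C_k$ and on $D_k$ (using the trivial bound $|D_k| \leq |A|^k$), I will obtain
\[
k h(\mu) \;\leq\; -\!\!\sum_{w \in \mathcal{L}_k(X)}\!\!\mu([w]) \ln \mu([w]) \;\leq\; p_k \ln |C_k| + Q_k \cdot k \ln|A| + H(p_k),
\]
where $H(p) = -p\ln p - (1-p)\ln(1-p)$ is uniformly bounded. Dividing through by $k$ and sending $k \to \infty$ (so $p_k \to 1$, $Q_k \to 0$, and $H(p_k)/k \to 0$) will yield $\liminf_{k \to \infty} \frac{\ln |C_k|}{k} \geq h(\mu)$, as claimed.

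The only mild obstacle will be the boundary accounting in the first step: converting a Birkhoff average of cylinder indicators into a count of occurrences entirely within a single word of length $k$ introduces an $O(1)$ error per $w_i$. Since there are only finitely many $w_i$ and this error is $o(k)$, it can be harmlessly absorbed by replacing $\epsilon$ with $\epsilon/2$ in the intermediate estimates and taking $k$ sufficiently large; the remainder of the argument is a routine manipulation.
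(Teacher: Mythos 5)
Your argument is correct, and it is self-contained, which is worth noting since the paper does not actually prove this lemma but defers it to Lemma 4.8 of \cite{pavlovperturb}. The proof given there (as is standard for lemmas of this type) runs through the Shannon--McMillan--Breiman theorem: for $\mu$-a.e.\ $x$ one has $-\frac{1}{k}\ln\mu([x_1\cdots x_k])\to h(\mu)$, so the generic words of length $k$ each have measure at most $e^{-k(h(\mu)-\delta)}$, and since they must carry total mass close to $1$ there must be at least roughly $e^{k(h(\mu)-\delta)}$ of them; Birkhoff is then used, exactly as in your first step, to impose the frequency constraints defining $C_{k,\epsilon,w_1,\ldots,w_j}(X)$. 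You replace the SMB input with the elementary chain $kh(\mu)\leq H_k(\mu)\leq p_k\ln|C_k|+Q_k k\ln|A|+H(p_k)$, i.e.\ the subadditivity bound stated after Definition~\ref{measent} together with the grouping form of the maximum-entropy inequality; this is a genuinely different (and arguably more elementary) way to convert ``$C_k$ carries mass $p_k\to 1$'' into a cardinality lower bound, at the cost of needing the $Q_k\,k\ln|A|$ term to control the complement, which SMB-based proofs avoid because they bound the measure of each individual generic word. Your boundary accounting is handled appropriately: the $O(1)$ discrepancy between the Birkhoff count over $[-N,N]$ and the count of occurrences lying entirely inside $x_{-N}\cdots x_N$ is absorbed by the $\epsilon/2$ slack, and the passage from a.e.\ eventual membership in the (shifted) sets $\bigcup_{w\in C_k}[w]$ to $p_k\to 1$ via dominated convergence and shift-invariance is sound. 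The only point you gloss over is that your Birkhoff windows naturally produce odd lengths $k=2N+1$, whereas the liminf is over all $k$; this is harmless, since dropping one symbol from the window changes each occurrence count by at most $1$, another $O(1)$ error of the kind you already absorb.
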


\begin{definition}
For any subshift $X$, a \textbf{measure of maximal entropy} on $X$ is a measure $\mu$ with support contained in $X$ for which $h(\mu) = h(X)$.
\end{definition}


As noted in the introduction, any subshift has at least one measure of maximal entropy. In fact, the ergodic decomposition, along with the fact that the entropy map $\mu \mapsto h(\mu)$ is affine (See Theorems 6.10 and 8.1 in \cite{walters}), implies that the extreme points of the simplex of measures of maximal entropy are precisely the ergodic measures of maximal entropy, and so in particular, any subshift also has an ergodic measure of maximal entropy. 

We will need to make use of the following fact about the full shift.

\begin{lemma}\label{fsunique}
For any alphabet $A$, the full shift $A^{\mathbb{Z}}$ has a unique measure of maximal entropy, namely the measure $\mu$ with $\mu([w]) = |A|^{-n}$ for all $n$ and $w \in A^n$.
\end{lemma}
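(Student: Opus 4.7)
The plan is to establish this fact using essentially the same entropy-versus-log-count comparison that appeared in the proof of Lemma~\ref{supportcount}, exploiting the equality case of the concavity inequality for Shannon entropy.

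First, I would observe that $|\mathcal{L}_n(A^{\mathbb{Z}})| = |A|^n$, so $h(A^{\mathbb{Z}}) = \ln |A|$ directly from Definition~\ref{topent}. Next, I would verify that the uniform measure $\mu$ defined by $\mu([w]) = |A|^{-n}$ for all $w \in A^n$ is indeed shift-invariant (trivial), well-defined via Kolmogorov extension (consistency on cylinders is immediate), and that its entropy equals $\ln|A|$: for each $n$,
\[
-\sum_{w \in A^n} \mu([w]) \ln \mu([w]) = -|A|^n \cdot |A|^{-n} \ln |A|^{-n} = n \ln |A|,
\]
so dividing by $n$ and passing to the limit gives $h(\mu) = \ln |A|$. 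This shows the uniform measure is a measure of maximal entropy.

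For uniqueness, suppose $\nu$ is any measure of maximal entropy on $A^{\mathbb{Z}}$, so $h(\nu) = \ln |A|$. The subadditivity remark following Definition~\ref{measent} yields, for every $n$,
\[
\ln |A| = h(\nu) \leq \frac{-1}{n} \sum_{w \in A^n} \nu([w]) \ln \nu([w]).
\]
On the other hand, the elementary inequality $-\sum_{i=1}^N x_i \ln x_i \leq \ln N$ applied to the probability vector $(\nu([w]))_{w \in A^n}$ with $N = |A|^n$ gives
\[
-\sum_{w \in A^n} \nu([w]) \ln \nu([w]) \leq \ln |A|^n = n \ln |A|.
\]
Chaining these two inequalities forces equality throughout, and the equality clause (cited already in the proof of Lemma~\ref{supportcount}) then implies $\nu([w]) = |A|^{-n}$ for every $w \in A^n$.

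Since this holds for all $n$, the measure $\nu$ agrees with the uniform measure on every cylinder set. Cylinder sets form a $\pi$-system generating the Borel $\sigma$-algebra on $A^{\mathbb{Z}}$, so by the uniqueness part of the Carathéodory/$\pi$-$\lambda$ extension theorem, $\nu = \mu$. There is no real obstacle here; the only thing to be careful about is citing the equality case of the entropy inequality at the right moment, exactly as in Lemma~\ref{supportcount}.
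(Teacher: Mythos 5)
Your proposal is correct and follows essentially the same route as the paper: both rest on the bound $h(\nu) \leq \frac{-1}{n}\sum_{w \in A^n}\nu([w])\ln\nu([w])$ together with the equality case of $-\sum_i x_i \ln x_i \leq \ln N$, you arguing directly that maximal entropy forces $\nu([w]) = |A|^{-n}$ for all $w$, the paper arguing the contrapositive that any nonuniform $\nu$ has entropy strictly below $\ln|A|$. The only additions on your side (checking $h(\mu) = \ln|A|$ explicitly and invoking the $\pi$-$\lambda$ theorem, where shift-invariance is what lets the anchored cylinders determine the measure) are harmless and left implicit in the paper.
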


\begin{proof}
The topological entropy of the full shift $A^{\mathbb{Z}}$ is
\[
h(A^{\mathbb{Z}}) = \lim_{n \rightarrow \infty} \frac{1}{n} \ln|\mathcal{L}_n(A^{\mathbb{Z}})| = \lim_{n \rightarrow \infty} \frac{1}{n} \ln (|A|^n) = \ln |A|.
\]

As noted above, for any probability vector $(x_1, \ldots, x_n)$, $-\sum_{i=1}^n x_i \ln x_i \leq \ln n$, and equality is achieved if and only if all $x_i$ are equal to $\frac{1}{n}$. For any measure $\nu$ not equal to $\mu$ as in the lemma, there exists $n$ for which $\nu([w])$ is not uniform over all $w \in A^n$, and so 
\[
h(\nu) \leq \frac{-1}{n} \sum_{w \in A^n} \mu([w]) \ln \mu([w]) < \frac{1}{n} \ln (|A|^n) = \ln |A| = h(A^{\mathbb{Z}}).
\]

This implies that every $\nu \neq \mu$ is not a measure of maximal entropy, and so $\mu$ must be the only measure of maximal entropy on $A^{\mathbb{Z}}$.

\end{proof}

It is well-known that the specification property of Bowen (\cite{bowen}) implies uniqueness of the measure of maximal entropy. We consider two weakenings of Bowen's property, called \textbf{almost weak specification} and \textbf{almost specification}. Though these properties can be defined for arbitrary topological dynamical systems, we here restrict our attention to subshifts, giving definitions specific to that case which are slightly simpler.

The following property was originally defined in \cite{marcusmonatshefte}, Lemma 2.1, but was not there given a name.

\begin{definition}\label{AWSdef}

A subshift $X$ has \textbf{almost weak specification with gap function $f(n)$} if\\

\noindent
$\bullet$ $f(n)$ is positive and nondecreasing


\noindent
$\bullet$ $\frac{f(n)}{n} \rightarrow 0$

\noindent
$\bullet$ For any words $w^{(1)}$, $w^{(2)}$, $\ldots$, $w^{(k)} \in \mathcal{L}(X)$, and for any integers $n_1$, $\ldots$, $n_{k-1}$ where $n_i \geq f(|w^{(i)}|)$ for all $i$, there exist words $v^{(1)} \in \mathcal{L}_{n_1}(X)$, $\ldots$, $v^{(k-1)} \in \mathcal{L}_{n_{k-1}}(X)$ so that the word $w^{(1)} v^{(1)} w^{(2)} v^{(2)} \ldots w^{(k-1)} v^{(k-1)} w^{(k)} \in \mathcal{L}(X)$. 

\end{definition}

The assumption that $f(n)$ is nondecreasing is not explicitly required in the literature, but prevents some pathological cases which would make our proofs more complicated. 
We require $f(n)$ to be positive since, for any $n$, $f(n) = 0$ would imply that the higher power shift $X^n$ is just a full shift.

\begin{definition}
A subshift $X$ has \textbf{specification with gap $g$} if it has almost weak specificaton with the constant gap function $f(n) = g$.
\end{definition}


We note that this definition is slightly different from some definitions of specification in the literature, which often assume that the final word created is part of a periodic point. However, our version of specification implies denseness of periodic points, and so the definitions are equivalent for subshifts (see \cite{bertrand}).

A second weakening of specification was defined by Pfister and Sullivan in \cite{pfister-sullivan}, and was there called the $g$-almost product property. We follow the convention of \cite{thompson} and call this property \textbf{almost specification}.

\begin{definition}\label{ASdef}
A subshift $X$ has \textbf{almost specification with mistake function $g(n)$} if\\

\noindent
$\bullet$ $g(n)$ is positive and nondecreasing

\noindent
$\bullet$ $\frac{g(n)}{n} \rightarrow 0$

\noindent
$\bullet$ For any words $w^{(1)}$, $w^{(2)}$, $\ldots$, $w^{(k)} \in \mathcal{L}(X)$, there exist words $v^{(1)}$, $v^{(2)}$, $\ldots$, $v^{(k)} \in \mathcal{L}(X)$ so that $|w^{(i)}| = |v^{(i)}|$ for every $i$, $w^{(i)}$ and $v^{(i)}$ differ on at most $g(|w^{(i)}|)$ letters for every $i$, and the concatenation $v^{(1)} v^{(2)} \ldots v^{(k)}$ is in $\mathcal{L}(X)$.

\end{definition}

There are many subshifts known to satisfy almost specification; for instance, any $\beta$-shift has almost specification with gap function $g(n) = 1$ (see \cite{pfister-sullivan2}), and many of the so-called $S$-gap shifts also satisfy almost specification (with gap function dependent on $S$).

\begin{remark}
In \cite{yamamoto}, Yamamoto also studies various weakenings of specification and their implications. The property that he calls almost specification is our almost weak specification, and the property that he calls the almost product property is essentially our almost specification. 
\end{remark}

\section{Almost weak specification}\label{AWS}


\begin{proof}[Proof of Theorem~\ref{mainex}]

We define $X$ to have alphabet $A = \{-N,\ldots,-1,0,1,\ldots,N\}$, with $N \geq 450$, and list of forbidden words $\mathcal{F}$ consisting of:\\

\noindent
$\bullet$ All adjacent pairs $ij$ whose product is negative, i.e. consisting of one negative and one positive letter

\noindent
$\bullet$ All pairs $i0j$ with $ij$ whose product is negative

\noindent
$\bullet$ All words $v_1 v_2 \ldots v_j 0^{m} v_{j+1}$, where all $v_i \neq 0$ and $m < 2 + \log_3 j$.\\

$X$ then contains all points which look like $\ldots w^{(-1)} 0^{m_{-1}} w^{(0)} 0^{m_0} w^{(1)} 0^{m_1} \ldots$, where each $w^{(i)}$ consists of a ``run'' of nonzero letters of the same sign, and for every $i$, $m_i \geq 2 + \log_3 |w^{(i)}|$. All other points of $X$ are ``degenerate'' cases which have either an infinite or biinfinite string of $0$s or positives or negatives. We also note for future reference that, given any word $w \in A^*$ which does not contain a forbidden word as described in the above list, the point $\ldots 000w000 \ldots$ is clearly in $X$, and so $w \in \mathcal{L}(X)$. 


We note that this example is quite similar to an example of Haydn (\cite{haydn}), for which he also proved existence of two ergodic measures of maximal entropy with disjoint support. The main difference is that he required the length of a run of $0$ letters to be at least linear in the size of the nonzero runs to the left and right of it (thus precluding almost weak specification), and that he forced the signs of nonzero runs separated by a run of $0$ letters to be opposite. His proof involved a word-counting argument to prove that the topological entropy of his system was $\ln N$; our approach will be a bit more complicated and measure-theoretic.

First, we will show that $X$ has almost weak specification with gap function $2 + \lceil \log_3 n \rceil$. Consider any words $w^{(1)}, \ldots, w^{(k)}$ in $\mathcal{L}(X)$ and any positive $m_1, \ldots, m_{k-1}$ with $m_i \geq 2 + \lceil \log_3 |w^{(i)}| \rceil$ for each $i$. Define $w := w^{(1)} 0^{m_1} w^{(2)} 0^{m_2} \ldots 0^{m_{k-1}} w^{(k)}$; we claim that $w \in \mathcal{L}(X)$, which will demonstrate the desired almost weak specification. Firstly, since all $m_i$ are greater than or equal to $2$, introducing the runs of $0$ letters between the words $w^{(i)}$ could not have possibly introduced an adjacent pair of nonzero letters of opposite sign, or a pair of such letters separated by a single $0$. All that's left is to show that $w$ does not contain any word of the form $v_1 v_2 \ldots v_j 0^{m} v_{j+1}$ with all $v_i$ nonzero and $m < 2 + \log_3 j$. Suppose for a contradiction that $w$ does contain such a word, call it $u$. Clearly $u$ cannot be contained in any of the $w^{(i)}$, since they were assumed to be in $\mathcal{L}(X)$. Just as clearly, the central $0^m$ in $u$ must contain an entire $0^{m_i}$ from $w$, and the letters $v_1 \ldots v_j$ must all be from the suffix of some $w^{(i)}$. However, this means that $m \geq 2 + \log_3 |w^{(i)}|$, and since we assumed $m < 2 + \log_3 j$, it must be the case that $j > |w^{(i)}|$. But this is impossible; $w^{(i)}$ is preceded in $w$ by a $0$ for $i > 1$, and by nothing for $i = 1$. Therefore, $w$ contains no forbidden words, and so is in $\mathcal{L}(X)$, proving almost weak specification of $X$ with gap function $2 + \lceil \log_3 n \rceil$.

Of course this was not the desired rate of almost weak specification. However, we will now show that for any $C > 0$, there exists a higher-power shift $X^k$ with almost weak specification with gap function 
$\max(1, \lceil C \log_3 n \rceil)$. This will clearly imply the desired property since any positive $f$ with $\displaystyle \liminf_{n \rightarrow \infty} \frac{f(n)}{\ln n} > 0$ is bounded from below by 
$\max(1, \lceil C \log_3 n \rceil)$ for some $C > 0$.

Choose $C > 0$ and define $k = \lceil 8C^{-1} \rceil$; we note that $k \geq 8$. Consider $X^k$, the $k$th higher-power shift of $X$. The reader may check that the almost weak specification of $X$ with gap function $2 + \lceil \log_3 n \rceil$ implies almost weak specification of $X^k$ with gap function $\lceil (2 + \lceil \log_3(kn) \rceil)/k \rceil$. We claim that 
\begin{equation}\label{higherpowerbound}
\lceil (2 + \lceil \log_3(kn) \rceil)/k \rceil \leq \max(1, \lceil C \log_3 n \rceil)
\end{equation}
for all $n$, which would imply that $X^k$ has almost weak specification with gap function $\max(1, \lceil C \log_3 n \rceil)$ as desired.

To verify (\ref{higherpowerbound}), we first note that both sides are always positive, and so we do not need to prove anything in the case where the left-hand side is $1$. Suppose that the left-hand side is at least $2$. Then, $(2 + \lfloor \log_3(kn) \rfloor)/k > 1$, which implies that $\log_3(kn) > k-2$, and so $n > 3^{k-2}/k > k$, since $k \geq 8$. Finally, we notice that $\lceil \log_3(kn) \rceil \geq 2$, and so
\[
\frac{2 + \lceil \log_3(kn) \rceil}{k} \leq \frac{2}{k} \lceil \log_3(kn) \rceil \leq \frac{4}{k} \log_3(kn) \leq \frac{4}{k} \log_3 (n^2) = \frac{8}{k} \log_3 n < C \log_3 n.
\]

This implies (\ref{higherpowerbound}), and therefore that for any positive $f(n)$ with $\liminf_{n \rightarrow \infty} \frac{f(n)}{\ln n} > 0$, there exists $k$ for which $X^k$ has almost weak specification with gap function $f(n)$.\\

Now we will show that $X$ has exactly two ergodic measures of maximal entropy, whose supports are disjoint, and that this property holds for all higher-power shifts $X^k$ as well, which will complete the proof of Theorem~\ref{mainex}. Consider any ergodic measure of maximal entropy $\mu$ of $X$. Our goal is to show that $\mu([0]) = 0$. We will go about this by inductively proving the following claim: for every $k \geq 2$ and every word 
$w \in \mathcal{L}(X)$ with $\mu([w]) > 0$ which ends with a nonzero letter, 
\begin{equation}\label{indhyp}
\mu([w 0^k *] \ | \ [w]) < 4 (0.5N)^{-0.5(k-1)},
\end{equation}
where the event $[w 0^k *]$ is the union of $[w 0^k b]$ over all nonzero letters $b$. (In general, we will use $*$ in cylinder sets to denote the union over all nonzero letters at those locations, and in words to denote that the word in question could have any nonzero letters at those locations.)

We begin with the base case $k = 2$. Fix any word $w$ with $\mu([w]) > 0$ which ends with a nonzero letter, and denote $\mu([w]) = \beta$ and $\mu([w 00 *] \ | \ [w]) = \alpha$; then $\mu([w00*]) = \alpha \beta$. Then, for every $\epsilon > 0$, define the collection $C_{n,\epsilon}$ of words in $\mathcal{L}_n(X)$ with between $(\beta - \epsilon)n$ and $(\beta + \epsilon) n$ occurrences of $w$ and between $(\alpha\beta - \epsilon) n$ and $(\alpha\beta + \epsilon)n$ occurrences of $w 00 *$. By Lemma~\ref{genericcount}, there exists $M$ so that for any $n > M$, $|C_{n,\epsilon}| \geq e^{n(h(X) - \epsilon)}$. We will now use a replacement argument to show that if $\alpha$ is too large, then we could create new collections of words in $\mathcal{L}_n(X)$ with cardinality growing at exponential rate greater than $h(X)$, a contradiction.

For any $n > M$, our replacement process assigns, to every $v \in C_{n,\epsilon}$, a set $S(v) \subset \mathcal{L}_n(X)$ in the following way. First, enumerate the occurrences of $w 00 *$ (again, the $*$ can be any nonzero letter) in $v$ from left to right. We will only work with every other (i.e. first, third, fifth, etc.) of these occurrences, counting from the left. For each such occurrence $w00b$, we complete $b$ to the maximal run of nonzero letters that it is contained in, yielding a subword of $v$ of the form $w 00 u$. We then change the common sign of all letters in $u$ (leaving the absolute values unchanged) to match the sign of the final letter of $w$. Finally, we remove the $00$ and replace it by any of the $N^2$ pairs of letters in $A$ with the same sign as the last letter of $w$, and also change the final letter of $u$ to $0$, thus lengthening the run of $0$ letters after $u$ (which is not chosen for replacement since we only selected every other $w 00 *$) by one. Doing this independently for every other occurrence of $w 00 *$ yields our collection $S(v)$, which has cardinality at least $(N^2)^{0.5(\alpha\beta - \epsilon) n}$ and which we claim always consists of words in $\mathcal{L}_n(X)$.

We first must show that all words created in this way are in $\mathcal{L}(X)$. It should be reasonably clear that the only possible problem is creating the longer runs of nonzero letters, which could have length too large in comparison to the next run of $0$ letters. For a particular replaced occurrence of $00$, we consider the next run of $0$ letters to the right. If this next run of $0$ letters occurred at the rightmost edge of $v$, then the replacement cannot have created a forbidden word, since there is no nonzero letter to ``end'' this rightmost run of $0$ letters. We therefore assume that the run of $0$ letters after our replaced $00$ is ended by a nonzero symbol, and denote its length by $j$. Then, the length of the run of nonzero letters immediately to the left of the replaced $00$ has length exactly $1$ (since it appeared immediately to the left of $00$, and two consecutive nonzero letters would force $\lceil 2 + \log_3 2 \rceil = 3$ zero letters after them), and the run of nonzero letters immediately to the right of the replaced $00$ has length less than or equal to $3^{j-2}$ (since it appeared immediately to the left of a run of $0$ letters of length $j$). The run of nonzero letters created after the replacement therefore has length $\ell \leq 3^{j-2} + 1 + 1 \leq 3^{j-1}$, and is followed by a run of $0$ letters of length $j + 1$. Therefore, since $2 + \log_3 \ell \leq j+1$, the words in $S(v)$ are all in $\mathcal{L}_n(X)$. 

If the collections $S(v)$ were disjoint, this would yield an obvious lower bound on $|\mathcal{L}_n(X)|$, but they are definitely not. We will instead find an upper bound on, for any $u$, the number of $v$ for which $S(v)$ contains $u$. Fix any $u$ which is in any $S(v)$. If we wish to enumerate the $v$ satisfying $u \in S(v)$, much of the word $v$ is already determined by $u$; the only unknowns are\\

\noindent
$\bullet$ the locations of the $w 00 *$ which were chosen for replacement in $v$

\noindent
$\bullet$ the original signs of the nonzero runs following the chosen occurrences of $w 00 *$

\noindent
$\bullet$ the original absolute value of the final letter of each of the aforementioned runs\\

The total number of choices for these pieces of information is bounded from above by
\[
{(\beta + \epsilon) n \choose 0.5 (\alpha \beta + \epsilon) n} 2^{0.5 (\alpha \beta + \epsilon) n} N^{0.5 (\alpha \beta + \epsilon) n}.
\]

Therefore, the union of all $S(v)$ over $v \in C_{n,\epsilon}$ has cardinality at least
\[
e^{n(h(X) - \epsilon)} \frac{N^{2(0.5(\alpha\beta - \epsilon) n)}}{{(\beta + \epsilon) n \choose 0.5 (\alpha \beta + \epsilon) n} 2^{0.5 (\alpha \beta + \epsilon) n} N^{0.5 (\alpha \beta + \epsilon) n}}.
\]

Taking a logarithm and dividing by $n$ yields
\[
h(X) - \epsilon - 2 \epsilon \ln N + 0.5 (\alpha \beta + \epsilon) \ln(0.5N) - \frac{\ln {(\beta + \epsilon) n \choose 0.5 (\alpha \beta + \epsilon) n}}{n}.
\]

By Stirling's approximation, the limit of this expression as $n \rightarrow \infty$ is
\begin{multline}\label{entropycontrad}
h(X) - \epsilon - 2 \epsilon \ln N + 0.5 (\alpha \beta + \epsilon) \ln(0.5N) \\
+ (\beta + \epsilon) \Big[ 0.5\big((\alpha \beta + \epsilon)/(\beta + \epsilon)\big) \ln \big(0.5(\alpha \beta + \epsilon)/(\beta + \epsilon)\big) \\
+ \big(1 - (0.5(\alpha \beta + \epsilon)/(\beta + \epsilon))\big) \ln \big(1 - (0.5(\alpha \beta + \epsilon)/(\beta + \epsilon))\big)\Big].
\end{multline}

The limit of this expression as $\epsilon \rightarrow 0$ is
\[
h(X) + 0.5 \alpha \beta \ln(0.5N) + \beta(0.5\alpha \ln(0.5 \alpha) + (1 - 0.5\alpha)\ln(1 - 0.5\alpha),
\]

and so for a contradiction, it suffices to know that
\[
0.5 \alpha \beta \ln(0.5N) > \beta (-0.5 \alpha \ln (0.5 \alpha) - (1 - 0.5 \alpha) \ln (1 - 0.5 \alpha))
\]
or, equivalently,
\begin{equation}\label{entropycontrad2}
0.5 \alpha \ln(0.5N) > -0.5 \alpha \ln (0.5 \alpha) - (1 - 0.5 \alpha) \ln (1 - 0.5 \alpha),
\end{equation}
since then we could take $\epsilon$ small enough to make (\ref{entropycontrad}) greater than $h(X) + \delta$ for some $\delta > 0$, then take $n$ large enough that our replacement procedure yields more than $e^{n(h(X) + \delta)}$ words in $\mathcal{L}_n(X)$ for all large $n$, a contradiction. 

We make the change of variable $\alpha' = 0.5 \alpha$; clearly $\alpha' \leq 0.5$. Then, it's easily verified that
$-(1 - \alpha') \ln(1 - \alpha') < -\alpha' \ln \alpha'$, and so we will have a contradiction as long as
\begin{multline*}
\alpha' \ln(0.5N) \geq -2\alpha' \ln \alpha' \Leftrightarrow \ln(0.5N) \geq -2 \ln \alpha' \Leftrightarrow \\
\alpha' \geq (0.5N)^{-0.5} \Leftrightarrow \alpha \geq 2 (0.5N)^{-0.5}.
\end{multline*}

Therefore, we have shown that $\alpha \geq 2 (0.5N)^{-0.5}$ yields a contradiction, and so we know that $\alpha = \mu([w 00 *] | [w]) < 2 (0.5N)^{-0.5}$, verifying (\ref{indhyp}) for $k = 2$.\\

We now choose any $k > 2$, and assume (\ref{indhyp}) for all $w$ with $\mu([w]) > 0$ which end with a nonzero letter and all smaller values than $k$. For any $w$, we wish to perform a similar replacement procedure to the above for occurrences of $w 0^k *$. However, there is a serious problem. Suppose that $k$ is large, that $w$ ends with a string of nonzero letters of length $3^{k-2}$, and that the next run of $0$ letters is very short, maybe of length two, meaning that we have a word of the form $u 0^k * 00 *$, $|u| = 3^{k-2}$. In mimicking the previous replacement argument, we wish to replace $0^k$ with nonzero letters, and so we will then have a run of nonzero letters of length approximately $3^{k-2}$ before $00$, which is clearly illegal. To fix this, we would need to add at least $k-2$ $0$ letters to increase the length of the next run to at least $k$. This means that informally, we've made a ``gain'' from replacing $k$ copies of $0$ by one of $N$ positives or negatives, but we've ``lost'' information from $k-2$ positives or negatives replaced by $0$ letters. This means that we will not get a large enough net gain by our replacements to get a useful bound of the sort in (\ref{indhyp}). 

However, it turns out that if the following run of $0$ letters has length at least $k$, then we can get away with adding only a single $0$ letter to the next run. All of this means that another replacement argument can be used to show that the conditional probability, given $w$, of $w 0^k *$ followed by a run of $0$ letters with length at least $k$, is very small (on the order of (\ref{indhyp}).) The final trick we use is to show, using the inductive hypothesis, that given $w 0^k *$, the conditional probability that the next run of $0$ letters has length at least $k$ is greater than $\frac{1}{2}$. This means that the conditional probability, given $w$, of $w 0^k *$, is at most twice the conditional probability, given $w$, of $w 0^k *$ where the next run of $0$ letters has length at least $k$, which we have already bounded from above. We now describe these steps formally.

Choose any $w \in \mathcal{L}(X)$ with $\mu([w]) > 0$, and denote $\beta = \mu([w])$ and $\alpha = \mu([w 0^k - 0^{\geq k} *] \ | \ [w])$, where $[w 0^k - 0^{\geq k} *]$ represents the event that $w 0^k$ is followed by some run of positives or negatives, then by a run of $0$ letters of length at least $k$, followed by a nonzero letter. Again, $\mu([w 0^k - 0^{\geq k} *]) = \alpha \beta$. For any $\epsilon > 0$, we define the collection $C_{n,\epsilon}$, which now will consist of words in $\mathcal{L}_n(X)$ with between $(\beta - \epsilon) n$ and $(\beta + \epsilon) n$ occurrences of $w$ and between $(\alpha\beta - \epsilon) n$ and $(\alpha \beta + \epsilon) n$ occurrences of $w 0^k - 0^{\geq k} *$. Again by Lemma~\ref{genericcount}, there exists $M$ so that for any 
$n > M$, $|C_{n,\epsilon}| \geq e^{n(h(X) - \epsilon}$. We now proceed as before, for any $n > M$ performing a replacement procedure on each $v \in C_{n,\epsilon}$ by removing $0^k$ in every other occurrence of $w 0^k - 0^{\geq k} *$, adjusting the sign of the nonzero run $u$ after $0^k$ to match that of the nonzero run before $0^k$, and changing only the final letter of $u$ to $0$, thus lengthening the run of $0$ letters after $u$ by one. This again yields a set $S(v)$ of words, which we will now show are in $\mathcal{L}_n(X)$. 

It should be reasonably clear that, as before, the only possible problem is creating the longer runs of nonzero letters, which could have length too great for the next run of $0$ letters. Again, for a particular replaced occurrence of $0^k$, we denote by $j$ the length of the next run of $0$ letters to the right; note that 
$j \geq k$. Then, the length of the run of nonzero letters immediately to the left of the replaced $0^k$ has length less than or equal to $3^{k-2}$ (since it appears immediately to the left of $0^k$), and the run of nonzero letters immediately to the right of the replaced $0^k$ has length less than or equal to $3^{j-2}$ (since it appears immediately to the left of $0^j$). The run of nonzero letters created after the replacement therefore has length $\ell \leq 3^{k-2} + 3^{j-2} + k - 1 \leq 2 \cdot 3^{j-2} + j - 1 \leq 3^{j-1}$ 
(since $j \geq k \geq 3$), and is followed by a run of $0$ letters of length $j + 1$. Therefore, since $2 + \log_3 \ell \leq j + 1$, the words in $S(v)$ are all in $\mathcal{L}_n(X)$. 

It is easily verified using the same techniques as were done for $k = 2$ that each $S(v)$ has cardinality at least $(N^k)^{0.5(\alpha\beta - \epsilon)n}$, and that for any $u$ in some $S(v)$, the number of $v$ for which $S(v)$ contains $u$ is still at most 
\[
{(\beta + \epsilon) n \choose 0.5 (\alpha \beta + \epsilon) n} 2^{0.5 (\alpha \beta + \epsilon) n} N^{0.5 (\alpha \beta + \epsilon) n)}.
\]
Exactly as before (i.e. take logarithms, divide by $n$, and let $n \rightarrow \infty$ and $\epsilon \rightarrow 0$), this will yield a contradiction if
\[
0.5 \alpha (k-1) \ln(0.5N) > (-0.5 \alpha \ln (0.5 \alpha) - (1 - 0.5 \alpha) \ln (1 - 0.5 \alpha)).
\]
We again make the change of variable $\alpha' = 0.5 \alpha$ and note that $\alpha' \leq 0.5$. This means that we can derive a contradiction as long as 
\begin{multline*}
\alpha' (k-1) \ln(0.5N) \geq -2\alpha' \ln \alpha' \Leftrightarrow (k-1) \ln(0.5N) \geq -2 \ln \alpha' \Leftrightarrow\\
\alpha' \geq (0.5N)^{-0.5(k-1)} \Leftrightarrow \alpha \geq 2 (0.5N)^{-0.5(k-1)}.
\end{multline*}
We have therefore showed that $\alpha = \mu([w 0^k - 0^{\geq k} *] \ | \ [w]) < 2 (0.5N)^{-0.5(k-1)}$. 

We now wish to bound from above the conditional probability $\mu([w 0^k - 0^{<k} *] \ | \ [w - 0^k])$, where $[w 0^k - 0^{<k} *]$ represents the event that 
$w$ is followed by some nonzero letter, then by a run of $0$ letters of length $k$, then by some run of positives or negatives, then by a run of $0$ letters of length less than $k$ ended by a nonzero letter. We break this event up as a disjoint union:
\[
[w 0^k - 0^{<k} *] = \bigsqcup_{i=2}^{k-1} \bigsqcup_{j=1}^{3^{i-2}} \bigsqcup_{\substack{|v| = j \\ \forall m, v_m \neq 0}} [w 0^k v 0^i *],
\] 
where the third union is over $v$ of length $j$ which consist entirely of positives or negatives.
Then,
\begin{multline*}
\mu([w 0^k - 0^{<k} *] \ | \ [w 0^k *]) = \sum_{i=2}^{k-1} \sum_{j=1}^{3^{i-2}} \sum_{\substack{|v| = j \\ \forall m, v_m \neq 0}} \mu([w 0^k v 0^i *] \ | \ [w 0^k *])\\
= \sum_{i=2}^{k-1} \sum_{j=1}^{3^{i-2}} \sum_{\substack{|v| = j \\ \forall m, v_m \neq 0}} \mu([w 0^k v] \ | \ [w 0^k *]) \mu([w 0^k v 0^i *] \ | \ [w 0^k v])\\
\leq \sum_{i=2}^{k-1} \sum_{j=1}^{3^{i-2}} \sum_{\substack{|v| = j \\ \forall m, v_m \neq 0}} 4 (0.5N)^{-0.5(i-1)} \mu([w 0^k v] \ | \ [w 0^k *])
\leq 4 \sum_{i=2}^{k-1} \sum_{j=1}^{3^{i-2}} (0.5N)^{-0.5(i-1)}\\
\leq 4 \sum_{i=2}^{\infty} 3^{i-2} (0.5N)^{-0.5(i-1)} = \frac{(4/3)C}{1-C},
\end{multline*}
where $C = \frac{3}{\sqrt{0.5N}}$. (The first inequality came from applying the inductive hypothesis (\ref{indhyp}) to $w 0^k v$.) Since $N \geq 450$, $C \leq \frac{1}{5}$. The reader can check that then $\frac{C}{1-C} \leq \frac{1}{4}$, and so 
$\mu([w 0^k - 0^{<k} *] \ | \ [w 0^k *]) \leq \frac{1}{3}$, and so $\mu([w 0^k - 0^{\geq k} *] \ | \ [w 0^k *]) \geq \frac{2}{3} > \frac{1}{2}$.

This means that
\begin{multline*}
2 (0.5N)^{-0.5(k-1)} \geq \mu([w 0^k - 0^{\geq k} *] \ | \ [w]) \\ = \mu([w 0^k - 0^{\geq k} *] \ | \ [w 0^k *]) \mu([w 0^k *] \ | \ [w]) > 0.5 \mu([w 0^k *] \ | \ [w]),
\end{multline*}
implying $\mu([w 0^k *] \ | \ [w]) < 4 (0.5N)^{-0.5(k-1)}$, verifying (\ref{indhyp}) for $k$ and completing the proof by induction that (\ref{indhyp}) holds for all 
$k$ and all $w$ with $\mu([w]) > 0$ which end with a nonzero letter.

One can take a weighted average over all $w$ to see that (\ref{indhyp}) implies that for all $k$,
\[
\mu([* 0^k *]) < 4 (0.5N)^{-0.5(k-1)},
\]
where $[* 0^k *]$ is the union of $[a 0^k b]$ over all nonzero $a,b$.

Finally, let's suppose for a contradiction that $\mu([0]) > 0$. Then, by Birkhoff's ergodic theorem (Theorem~\ref{birkhoff}) applied to $f = \chi_{[0]}$, for $\mu$-a.e. $x \in X$,
\[
\lim_{n \rightarrow \infty} \frac{1}{2n+1} |\{i \in [-n,n] \ : \ x_i = 0\}| = \mu([0]) > 0.
\]
In particular, $\mu$-a.e. $x \in X$ contains infinitely many $0$s, and so we can partition all of $X$ except a measure zero subset $N$ by the location of the origin relative to the next consecutive run of $0$s:
\[
X = N \sqcup ([0] \cap N^c) \sqcup ([*] \cap N^c) = N \sqcup \left( \bigsqcup_{i=2}^{\infty} \bigsqcup_{j=1}^i \sigma^j [* 0^i *] \right) \sqcup 
\left( \bigsqcup_{i=2}^{\infty} \bigsqcup_{j=1}^{3^{i-2}} \sigma^j \left( \bigsqcup_{\substack{|v| = j \\ \forall m, v_m \neq 0}} [v 0^i] \right) \right),
\]
where again each $*$ represents any nonzero letter.

This implies that 
\begin{multline*}
\mu(X) = 1 = \left( \sum_{i=2}^{\infty} \sum_{j=1}^i \mu([* 0^i *]) \right) + 
\left( \sum_{i=2}^{\infty} \sum_{j=1}^{3^{i-2}} \sum_{\substack{|v| = j \\ \forall m, v_m \neq 0}} \mu([v 0^i *]) \right)\\
\leq \sum_{i=2}^{\infty} 4i (0.5N)^{-0.5(i-1)} + \sum_{i=2}^{\infty} \sum_{j=1}^{3^{i-2}} \sum_{\substack{|v| = j \\ \forall m, v_m \neq 0}} 4(0.5N)^{-0.5(i-1)} \mu([v])\\
\leq \sum_{i=2}^{\infty} 4(i + 3^{i-2}) (0.5N)^{-0.5(i-1)} < 12 \sum_{i=2}^{\infty} 3^{i-2} (0.5N)^{-0.5(i-1)} = \frac{4C}{1-C} \leq 1,
\end{multline*}
a contradiction. (Again, the first inequality came from applying (\ref{indhyp}) to $v$, and $C = \frac{3}{\sqrt{0.5N}}$, which we know to be less than or equal to $\frac{1}{5}$.) Therefore, we have finally shown that $\mu([0]) = 0$ for all ergodic measures of maximal entropy $\mu$ on $X$. The ergodic decomposition implies that in fact this is true for nonergodic measures of maximal entropy on $X$ as well.

However, if $\mu([0]) = 0$, then $\mu$ has support contained within the union of the full shift on $\{1, \ldots N\}$ and the full shift on $\{-1, \ldots, -N\}$. Since both of these full shifts are shift-invariant, if $\mu$ is in addition ergodic, its support is contained in one or the other. Since each of these shifts has a unique ergodic measure of maximal entropy by Lemma~\ref{fsunique} (namely the uniform Bernoulli measure), and since those measures have the same entropy $\ln N$, we have shown that $h(X) = \ln N$ and that the only ergodic measures of maximal entropy are the uniform Bernoulli measures on the positives and negatives respectively, which have disjoint support.

We now prove the same about any higher power shift $X^k$. Consider any ergodic measure of maximal entropy $\nu$ on $X^k$. Then $\nu$ clearly induces a measure $\nu^*$ on $X$ by defining $\nu^*([w^{(1)} \ldots w^{(n)}]) = 
\nu([w^{(1)} \ldots w^{(n)}])$ for all choices of $w^{(1)}, \ldots, w^{(n)} \in \mathcal{L}_k(X)$; the $w^{(i)}$ are interpreted as concatenated $k$-letter words on the left-hand side and as letters in the alphabet of $X^k$ on the right-hand side. It's quite possible that $\nu^*$ is not even $\sigma$-invariant, but it is invariant under $\sigma^k$ since $\nu$ was $\sigma$-invariant on $X^k$. Therefore, $\mu := \frac{1}{k} \sum_{i=0}^{k-1} \sigma^i \nu^*$ is a $\sigma$-invariant measure on $X$, and it is a measure of maximal entropy on $X$ since $\nu$ was a measure of maximal entropy on $X^k$ and the entropy map $\mu \mapsto h(\mu)$ is affine. Therefore, $\mu([0]) = 0$, which clearly implies that $\nu([u]) = 0$ for every $u \in \mathcal{L}_k(X)$ containing a $0$ letter. Now, since $\nu$ is ergodic as a measure on $X^k$, this implies that $\nu$ is supported either entirely on the full shift on $\{1, \ldots N\}^k$ or the full shift on $\{-1, \ldots, -N\}^k$, and as before this means that there are exactly two choices for $\nu$.

\end{proof}


\begin{proof}[Proof of Theorem~\ref{mainthm}]
Suppose for a contradiction that $X$ is a subshift with almost weak specification with a gap function $f(n)$ satisfying $\liminf_{n \rightarrow \infty} \frac{f(n)}{\ln n} = 0$ and which possesses two measures of maximal entropy $\mu, \nu$ with disjoint supports. 

We first use $\mu$ and $\nu$ to give a lower bound on $|\mathcal{L}_n(X)|$. For every $n$, define $\mathcal{M}_n(X) = \{w \in \mathcal{L}_n(X) \ : \ \mu([w]) > 0\}$ and $\mathcal{N}_n(X) = \{w \in \mathcal{L}_n(X) \ : \ \nu([w]) > 0\}$. (For $n = 0$, we define both $\mathcal{M}_n(X)$ and $\mathcal{N}_n(X)$ to be singletons consisting of the empty word $\varnothing$.) It should be clear that for any $k < n$, any $k$-letter subword of a word in $\mathcal{M}_n(X)$ must be in $\mathcal{M}_k(X)$, and that a similar statement holds for subwords of words in $\mathcal{N}_n(X)$. Lemma~\ref{supportcount} implies that $|\mathcal{M}_n(X)| \geq e^{n h(\mu)} = e^{n h(X)}$ and $|\mathcal{N}_n(X)| \geq e^{n h(\mu)} = e^{n h(X)}$ for all $n$. Also, since $\mu$ and $\nu$ have disjoint supports, there exists $N$ so that $\mathcal{M}_n(X) \cap \mathcal{N}_n(X) = \varnothing$ for all $n \geq N$. 


We choose any $n$ for which $n > f(n)$ (possible since $\frac{f(n)}{n} \rightarrow 0$) and will use almost weak specification to bound $|\mathcal{L}_n(X)|$ from below as follows: for any $i \in [0,n - f(n)]$ which is a multiple of 
$f(n) + N$, choose words $w \in \mathcal{M}_i(X)$ and $v \in \mathcal{N}_{n - f(n) - i}(X)$. Then, since $i, n - i - f(n) \leq n$ and since $f(n)$ is nondecreasing, almost weak specification of $X$ implies that there exists $u$ with length $f(n)$ so that $wuv \in \mathcal{L}_{n}(X)$. We claim that the map from $(i,w,v)$ to $wuv$ (choose $u$ to be the lexicographically minimal option to make this map a function) is one-to-one. To see this, suppose for a contradiction that for choices $(i,w,v) \neq (i', w', v')$, $wuv = w'u'v'$. If $i = i'$, then either $w \neq w'$ or $v \neq v'$, and we have an obvious contradiction. But, if $i \neq i'$, then $|i - i'| \geq f(n) + N$, implying that either $w$ and $v'$ share an $N$-letter subword or $w'$ and $v$ share an $N$-letter subword, both contradictions since that word would be in $\mathcal{M}_{N}(X) \cap \mathcal{N}_{N}(X)$. 
Therefore, the map is one-to-one, and so generates at least 
\begin{multline*}
\sum_{i \in [0, n - f(n)], (f(n) + N) | i} |\mathcal{M}_i(X)| |\mathcal{N}_{n - f(n) - i}(X)| \geq \\
\sum_{i \in [0, n - f(n)], (f(n) + N) | i} e^{ih(X)} e^{(n - f(n) - i) h(X)} \geq \frac{n - f(n)}{f(n) + N} e^{(n - f(n)) h(X)}
\end{multline*}
words in $\mathcal{L}_{n}(X)$. Therefore, 
\begin{equation}\label{toomanywords}
|\mathcal{L}_{n}(X)| \geq \frac{n - f(n)}{f(n) + N} e^{(n - f(n)) h(X)}. 
\end{equation}

We also note that for any $t$ and any $w^{(1)}$, $\ldots$, $w^{(t)} \in \mathcal{L}_n(X)$, we can use almost weak specification to create a word 
$w^{(1)} v^{(1)} w^{(2)} v^{(2)} \ldots w^{(t)} v^{(t)}$ in $\mathcal{L}_{t(n+f(n))}(X)$, where all $v^{(t)}$ are of length $f(n)$. This clearly implies that 
\[
|\mathcal{L}_{t(n+f(n))}(X)| \geq |\mathcal{L}_n(X)|^t,
\]
and we can take logarithms, divide by $t$, and let $t$ approach infinity to see that
\begin{equation}\label{entbound}
(n + f(n)) h(X) \geq \ln |\mathcal{L}_n(X)|.
\end{equation}

Combining (\ref{toomanywords}) and (\ref{entbound}) implies that for large enough $n$,
\[
(n + f(n)) h(X) \geq \ln |\mathcal{L}_{n}(X)| \geq \ln(n - f(n)) - \ln(f(n) + N) + (n - f(n)) h(X).
\]

We rewrite as
\[
2f(n) h(X) + \ln(f(n) + N) \geq \ln(n - f(n)).
\]

However, if we choose a sequence $n_k$ along which $\frac{f(n_k)}{\ln n_k} \rightarrow 0$ and let $k \rightarrow \infty$, then all terms on the left-hand side are 
$o(\ln(n_k))$, and the right side gets arbitrarily close to $\ln(n_k)$. Therefore, our original assumption was false, completing the proof.

\end{proof}

\section{Almost specification}\label{AS}

We must begin with some lemmas related to coding theory, namely constructions of small sets which are $n$-spanning with respect to the Hamming distance.

\begin{definition}
For any alphabet $A$ and $n \in \mathbb{N}$, the \textbf{Hamming distance} $d$ on $A^n$ is given by $d(v,w) := |\{i \ : \ v_i \neq w_i\}|$, the number of locations at which $v$ and $w$ differ.
\end{definition}

\begin{lemma}\label{codinglemma}
For every alphabet $A$ and positive integer $n$, there exists a set $T_{A,n} \subset A^n$ such that $|T_{A,n}| \leq \frac{1}{2^{\lfloor \log_2 n \rfloor}} |A|^n$ and $T_{A,n}$ is $1$-spanning with respect to the Hamming distance $d$, i.e. for any $w \in A^n$, there exists $t \in T_{A,n}$ s.t. $d(t,w) \leq 1$.
\end{lemma}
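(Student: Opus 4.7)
The plan is to produce $T_{A,n}$ by a probabilistic Hamming-code construction, using the parity-check structure of a binary code of redundancy $k := \lfloor \log_2 n \rfloor$ together with a random binary projection of $A$. The case $|A| = 1$ is trivial, so assume $|A| \geq 2$.

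First I would set up the code. Since $n \geq 2^k > 2^k - 1$, one can choose a $k \times n$ matrix $H$ over $\mathbb{F}_2$ whose columns are all nonzero and include every nonzero vector of $\mathbb{F}_2^k$ at least once; this makes $H$ surjective and $|\ker H| = 2^{n-k}$. For each $i \in \{1,\ldots,n\}$, let $\pi_i : A \to \mathbb{F}_2$ be chosen independently and uniformly at random from the $2^{|A|}-2$ surjections, and define $\pi : A^n \to \mathbb{F}_2^n$ by $\pi(w)_i := \pi_i(w_i)$. Set $T_\pi := \{\, w \in A^n : H\pi(w) = 0 \,\}$.

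Next I would verify the $1$-spanning property. Given any $w \in A^n$, the syndrome $s := H\pi(w) \in \mathbb{F}_2^k$ is either $0$ (in which case $w \in T_\pi$) or equals some column $h_i$ of $H$; since $\pi_i$ is surjective, we may replace $w_i$ by any $a \in A$ with $\pi_i(a) \neq \pi_i(w_i)$, yielding a word $w' \in T_\pi$ with $d(w,w') = 1$.

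For the size bound, the key observation is that for any fixed $a \in A$, a direct subset count gives $\mathbb{P}(\pi_i(a) = 0) = (2^{|A|-1} - 1)/(2^{|A|} - 2) = 1/2$; hence for any fixed $w \in A^n$ the coordinates $\pi_1(w_1),\ldots,\pi_n(w_n)$ are independent and uniform in $\mathbb{F}_2$, so $\pi(w)$ is uniform in $\mathbb{F}_2^n$ and $\mathbb{P}(H\pi(w) = 0) = |\ker H|/2^n = 1/2^k$. Summing over $w \in A^n$ gives $\mathbb{E}_\pi[|T_\pi|] = |A|^n/2^k$, so some realization of $\pi$ produces $|T_\pi| \leq |A|^n/2^k$, which we take as $T_{A,n}$. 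I do not anticipate a serious obstacle here: the only point requiring any care is the elementary subset count showing uniformity of $\pi_i(a)$, after which the expected-value computation and averaging close the argument.
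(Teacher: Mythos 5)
Your argument is correct and rests on the same core idea as the paper's: a Hamming-type parity-check structure plus syndrome decoding gives the $1$-spanning property, and an averaging argument gives the cardinality bound. The difference is where the averaging happens. The paper fixes the letter-to-bit projection (reduction mod $2$ on $A=\{0,\dots,|A|-1\}$), imposes the $m=\lfloor\log_2 n\rfloor$ Hamming parity checks only on the first $2^m$ coordinates, and lets the target syndrome $v\in\{0,1\}^m$ vary: the sets $T_{A,n,v}$ partition $A^n$, each is $1$-spanning by exactly the syndrome-correction step you use, and pigeonhole over the $2^m$ cosets yields one of size at most $|A|^n/2^m$. You instead fix the zero-syndrome set but randomize the projections $\pi_i$ over surjections $A\to\mathbb{F}_2$, so that $\pi(w)$ is exactly uniform for each fixed $w$ and $\mathbb{E}|T_\pi|=|A|^n/2^k$; this neatly sidesteps the fact that the fixed mod-$2$ projection is biased when $|A|$ is odd, which is precisely what the paper's coset-averaging is there to absorb. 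Both routes are equally elementary and give the same bound; the paper's is deterministic up to choosing the best coset, yours is probabilistic up to choosing a good realization of the $\pi_i$. One small caveat: dismissing $|A|=1$ as ``trivial'' is off --- for $n\ge 2$ the claimed bound would force $T_{A,n}=\varnothing$, so the statement actually fails in that degenerate case --- but the paper's formulation and proof (which needs letters of both parities) share this blind spot, and the lemma is only applied with $|A|\ge 2$, so nothing is harmed.
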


\begin{proof}

Choose any $A$ and $n$, and assume without loss of generality that $A = \{0, \ldots, |A|-1\}$. Define $m = \lfloor \log_2 n \rfloor$, so that $2^m \leq n < 2^{m+1}$. Then, for any $v = v_0 \ldots v_{m-1} \in \{0,1\}^m$, define $T_{A,n,v}$ to be the set of all $w = w_0 \ldots w_{n-1} \in A^n$ such that for every $j \in [0,m)$, the sum of $w_i$ over all $i \in [0,2^m-1]$ whose binary expansion has a $0$ in the $2^j$ place is equal to $v_j \pmod 2$. For example, take $A = \{0,1,2\}$ and $n = 10$ (so $m = 3$), and $v = 010$. Then, $T_{A,n,v}$ is the set of all $w \in A^n$ for which $w_0 + w_2 + w_4 + w_6 = 0 \pmod 2$, $w_0 + w_1 + w_4 + w_5 = 1 \pmod 2$, and $w_0 + w_1 + w_2 + w_3 = 0 \pmod 2$, and so $0121200111 \in T_{A,n,v}$ and $0211221100 \notin T_{A,n,v}$. 

We claim that any set $T_{A,n,v}$ is $1$-spanning. To see this, consider any $w \in A^n$. Then, for some values of $j \in [0,m)$, the sum of $w_i$ over all $i$ whose binary expansion has a $0$ in the $2^j$ place is already equal to $v_j \pmod 2$, and for some it is not. Define $J \subseteq [0,m)$ to be the set of $j$ for which the aforementioned sum is equal to $v_j \pmod 2$. Then, choose $i \in [0,2^m)$ so that the binary expansion of $i$ has
$0$s precisely in $2^j$-indexed places for $j \notin J$, i.e. $i = \sum_{j \in J} 2^j$. Note that $0 \leq i \leq 2^m - 1$. We can then define $w'$ to be any word obtained by changing $w_i$ to any letter of $A$ with the opposite parity; then we claim that $w' \in T_{A,n,v}$. This is because the sum of $w'_i$ over all $i$ whose binary expansion has a $0$ in the $2^j$ place is equal to the corresponding sum of $w_i$ if and only if $j \in J$, and so this sum will now always equal $v_j \pmod 2$. 

For instance, continuing the example above: $w = 0211221100 \notin T_{A,n,v}$. In this case, $w_0 + w_2 + w_4 + w_6 = 0 \pmod 2$, $w_0 + w_1 + w_4 + w_5 \neq 1 \pmod 2$, and $w_0 + w_1 + w_2 + w_3 = 0 \pmod 2$, so $J = \{0,2\}$. Then, we would define $i = 2^0 + 2^2 = 5$, and define $w'$ by changing $w_5 = 2$ to a letter of $A$ with opposite parity, so $w' = 0211211100$. Then, $w' \in T_{A,n,v}$. 

We finish by noting that $T_{A,n,v}$ is a partition of $A^n$, and so since there are $2^m$ choices for $v$, there exists $T_{A,n,v}$ with cardinality at most $\frac{1}{2^m} |A|^n$; define $T_{A,n}$ to be that set.

\end{proof}

\begin{remark}
The sets $T_{A,n}$ are essentially truncated Hamming codes on general alphabets. To say a bit more, the case where $A = \{0,1\}$ and $n$ is a power of $2$ (say $n = 2^m$) is special; it is one of the few cases where a ``perfect'' code is known to exist, i.e. a set $C$ which is $1$-spanning and for which every $w \in A^n$ has a unique $t \in C$ for which $d(w,t) = 1$. This is the Hamming code, and it coincides with our construction exactly for such $n$ and $A$ with $v = 0 \ldots 0$. 

Since we need such sets for all lengths and alphabets, we simply chose, for any $n$, the largest power of $2$ less than or equal to $n$ (i.e. $2^m$), and used a Hamming code on the first $2^m$ digits. We also used the same parity check idea even for larger alphabets where it is not nearly as efficient, since it still suffices for our purposes.
\end{remark}

\begin{lemma}\label{codinglemma2}

For every alphabet $A$ and positive integer $n$, there exists a set $U_{A,n} \subset A^n$ such that $|U_{A,n}| \leq \frac{16}{n^2} |A|^n$ and $U_{A,n}$ is $2$-spanning with respect to the Hamming distance $d$, i.e. for all $w \in A^n$, there exists $u \in U_{A,n}$ s.t. $d(u,w) \leq 2$.

\end{lemma}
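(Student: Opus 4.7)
The plan is to construct $U_{A,n}$ as a Cartesian product of two $1$-spanning codes obtained from Lemma~\ref{codinglemma} applied to the two halves of a word. Specifically, set $n_1 = \lfloor n/2 \rfloor$ and $n_2 = \lceil n/2 \rceil$, so that $n = n_1 + n_2$, and identify $A^n$ with $A^{n_1} \times A^{n_2}$ by splitting each word into its first $n_1$ letters and its last $n_2$ letters. Applying Lemma~\ref{codinglemma} yields $1$-spanning sets $T_{A,n_1} \subset A^{n_1}$ and $T_{A,n_2} \subset A^{n_2}$, and I would define $U_{A,n} := T_{A,n_1} \times T_{A,n_2}$, viewed as a subset of $A^n$ through this identification. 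Very small $n$ (where $n_1$ could be $0$) is handled trivially by taking $U_{A,n} = A^n$, since $\tfrac{16}{n^2} \geq 1$ whenever $n \leq 4$.

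Verifying $2$-spanning is immediate: given $w = (w^1, w^2) \in A^{n_1} \times A^{n_2}$, the $1$-spanning property of each $T_{A,n_i}$ yields $t_i \in T_{A,n_i}$ with $d(t_i, w^i) \leq 1$, and then $(t_1, t_2) \in U_{A,n}$ satisfies $d((t_1,t_2), w) = d(t_1, w^1) + d(t_2, w^2) \leq 2$.

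The cardinality bound is the only step requiring any care. By Lemma~\ref{codinglemma},
\[
|U_{A,n}| \;\leq\; \frac{|A|^{n_1}}{2^{\lfloor \log_2 n_1 \rfloor}} \cdot \frac{|A|^{n_2}}{2^{\lfloor \log_2 n_2 \rfloor}} \;=\; \frac{|A|^n}{2^{\lfloor \log_2 n_1 \rfloor + \lfloor \log_2 n_2 \rfloor}}.
\]
Using the elementary fact that $2^{\lfloor \log_2 m \rfloor + 1} > m$, which rearranges to $2^{\lfloor \log_2 m \rfloor} \geq (m+1)/2$ for every $m \geq 1$, the denominator is at least $(n_1+1)(n_2+1)/4$, so $|U_{A,n}| \leq \frac{4 |A|^n}{(n_1+1)(n_2+1)}$. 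A brief case check shows $(n_1+1)(n_2+1) \geq n^2/4$: if $n$ is even it equals $(n/2+1)^2 \geq n^2/4$, and if $n$ is odd it equals $(n+1)(n+3)/4 \geq n^2/4$. This gives $|U_{A,n}| \leq \frac{16}{n^2}|A|^n$, as required.

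The main obstacle is making sure the bound really produces the constant $16$ and not something slightly larger. A naive use of $2^{\lfloor \log_2 m \rfloor} \geq m/2$ gives only $\frac{16}{n^2 - 1}|A|^n$ in the odd-$n$ case, so the slightly sharper inequality $2^{\lfloor \log_2 m \rfloor} \geq (m+1)/2$ is the little extra leverage one needs to land exactly at the stated bound without having to enlarge the constant.
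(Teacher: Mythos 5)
Your proposal is correct and follows essentially the same route as the paper: split a word into its two halves, take $U_{A,n}$ to be the product of the two $1$-spanning sets from Lemma~\ref{codinglemma}, and verify the cardinality bound (the paper uses the estimate $2^{\lfloor \log_2(\lfloor 0.5n\rfloor)\rfloor} \geq 0.25n$, while you use $2^{\lfloor\log_2 m\rfloor} \geq (m+1)/2$, which is only a cosmetic difference). Your explicit handling of very small $n$ via $U_{A,n}=A^n$ is a reasonable, slightly more careful touch than the paper's.
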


\begin{proof}

For any $n$, we simply define $T_{A,\lfloor 0.5n \rfloor}$ and $T_{A, \lceil 0.5 n \rceil}$ as above, and define $U_{A,n} = \{uv \ : \ u \in T_{A,\lfloor 0.5n \rfloor}, v \in T_{A,\lceil 0.5n \rceil}\}$. It should be clear that $U_{A,n}$ is $2$-spanning; for any $w \in A^n$, at most one change is required to change its prefix of length $\lfloor 0.5n \rfloor$ to a word in $T_{A,\lfloor 0.5n \rfloor}$, and at most one change is required to change its suffix of length $\lceil 0.5n \rceil$ to a word in $T_{A,\lceil 0.5n \rceil}$. It's not hard to check that $2^{\lfloor \log_2 (\lfloor 0.5 n \rfloor) \rfloor} \geq 0.25n$. Then, by Lemma~\ref{codinglemma},
\[
|U_{A,n}| = |T_{A,\lfloor 0.5n \rfloor}| |T_{A,\lceil 0.5n \rceil}| \leq \frac{1}{0.25n} |A|^{\lfloor 0.5n \rfloor} \cdot \frac{1}{0.25n} |A|^{\lceil 0.5n \rceil} = \frac{16}{n^2} |A|^n.
\]

\end{proof}

\begin{lemma}\label{codinglemma2.5}
For every alphabet $A$ and positive integer $n > 1$, there exists a set $V_{A,n} \subset A^n$ such that $|V_{A,n}| = \frac{1}{|A|^2} |A|^n$ and $V_{A,n}$ is $2$-spanning with respect to the Hamming distance $d$.
\end{lemma}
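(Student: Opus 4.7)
The plan is to observe that the cardinality budget $|A|^{n-2}$ in this lemma is exponentially larger than the bounds $\tfrac{1}{2^{\lfloor \log_2 n \rfloor}}|A|^n$ and $\tfrac{16}{n^2}|A|^n$ in Lemmas~\ref{codinglemma} and~\ref{codinglemma2}, so no Hamming-code machinery is needed. A trivial construction that simply freezes two coordinates will satisfy both the cardinality and covering requirements simultaneously.

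Concretely, I would fix any distinguished letter $a \in A$ and define
\[
V_{A,n} \ := \ \{w \in A^n \ : \ w_1 = w_2 = a\}.
\]
Then $V_{A,n}$ is in bijection with $A^{n-2}$ via the map $w \mapsto w_3 w_4 \ldots w_n$, so the cardinality is exactly $|A|^{n-2} = \tfrac{1}{|A|^2}|A|^n$, as required.

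For the $2$-spanning property, I would produce an explicit close neighbor: given any $u \in A^n$, let $w := a\, a\, u_3 u_4 \ldots u_n$. By construction $w \in V_{A,n}$, and $w$ agrees with $u$ on all coordinates except possibly positions $1$ and $2$, so $d(u,w) \leq 2$.

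There is essentially no obstacle, and in particular no step that requires any of the algebraic structure built up in Lemmas~\ref{codinglemma} and~\ref{codinglemma2}; the weaker cardinality bound $|A|^{n-2}$ admits this direct construction. The only ``choice'' is the distinguished letter $a$, which is harmless since $A$ is nonempty (and the statement is vacuous otherwise).
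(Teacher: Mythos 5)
Your construction is exactly the one in the paper (the paper fixes the letter $1$ where you fix an arbitrary $a \in A$), and your verification of the cardinality and the $2$-spanning property is correct. Nothing further is needed.
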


\begin{proof}
Simply define $V_{A,n} = \{w = w_1 w_2 \ldots w_n \in A^n \ : \ w_1 = w_2 = 1\}$. The reader may check that $V_{A,n}$ has the desired properties.
\end{proof}

\begin{proof}[Proof of Theorem~\ref{mainex2}]

We define $X$ to have alphabet $A = \{-N,\ldots,-1,1,\ldots,N\}$, where $N$ will be defined later. For a parameter $\ell$, also to be determined later, we define 
\[
P_n = 
\begin{cases}
\{1\} & n = 1\\
V_{\{1,\ldots,N\},n} & 1 < n \leq \ell\\
U_{\{1,\ldots,N\},n} & n > \ell
\end{cases}
\]
and
\[
N_n = 
\begin{cases}
\{-1\} & n = 1\\
V_{\{-N,\ldots,-1\},n} & 1 < n \leq \ell\\
U_{\{-N,\ldots,-1\},n} & n > \ell
\end{cases}
\]
where $U_{A,n}$ and $V_{A,n}$ are defined as in Lemmas~\ref{codinglemma2} and~\ref{codinglemma2.5}. Clearly then $|N_n| = |P_n|$ for all $n$; we denote their common value by $M_n$. We also note that all $N_n$ and $P_n$ are $2$-spanning; for the case $n = 1$ this is trivial, and for $n > 1$ this comes from Lemmas~\ref{codinglemma2} and~\ref{codinglemma2.5}.

Then, we define $X$ via the list of forbidden words $\mathcal{F}$ consisting of \\

\noindent
$\bullet$ All words $n P n'$ where $n,n' < 0$, $P$ consists of positive letters, and $P \notin \bigcup P_n$ and

\noindent
$\bullet$ All words $p N p'$ where $p,p' < 0$, $N$ consists of negative letters, and $N \notin \bigcup N_n$.\\

Points of $X$ then consist of biinfinite concatenations of words of constant sign, each of which must be in either some $P_n$ or $N_n$, depending on its sign and length. There are also transient points of $X$ which have one or more infinite words of constant sign, on which there are no restrictions.

We first show that $X$ has almost specification with gap function $g(n) = 4$. For this purpose, consider any words $w^{(1)}$, $\ldots$, $w^{(k)}$. Then, the concatenation $w = w^{(1)} w^{(2)} \ldots w^{(k)}$ might not be in $\mathcal{L}(X)$. However, we can turn this into a word in $\mathcal{L}(X)$ by making changes to each maximal word of constant sign within $w$ which place them in either some $P_n$ or $N_n$. By the $2$-spanning property of all $P_n$ and $N_n$, we can change at most $2$ letters in each maximal word of constant sign within $w$ and create a new word $w' \in \mathcal{L}(X)$. No maximal word of constant sign which is not a prefix or suffix of $w^{(i)}$ would have required a change, since $w^{(i)} \in \mathcal{L}(X)$, implying that any such word would have been in some $P_n$ or $N_n$ anyway. 

Therefore, when $w$ was changed to $w'$, no more than $4$ changes would have been made in any $w^{(i)}$, those being only in the words of constant sign at the beginning and end at $w^{(i)}$. This completes the proof of almost specification with $g(n) = 4$.\\

We will now show that $h(X) = \ln N$, which will imply that $X$ has two measures of maximal entropy with disjoint supports, namely the uniform Bernoulli measures on the positive and negative letters of $A$ respectively. For this, we will just bound $\mathcal{L}_n(X)$ from above for all $n$. Every $w \in \mathcal{L}_n(X)$ consists of a concatenation of words of constant sign. Therefore, we can parametrize elements of $\mathcal{L}_n(X)$ by the number $k \geq 1$ of such concatenated words and their lengths $n_1$, $n_2$, $\ldots$, $n_k$, which clearly must sum to $n$. We then see that
\begin{equation}\label{firststep}
|\mathcal{L}_n(X)| = 2N^n + 2(n-1) N^n + \sum_{k=3}^n \sum_{\substack{n_1, \ldots, n_k\\ \sum n_i = n}} 2 N^{n_1} \left(\prod_{j=2}^{k-1} M_j\right) N^{n_k}.
\end{equation}

Here, the first two terms correspond to the cases $k = 1,2$. In each term, the factor of $2$ comes from choosing the sign of the first word of constant sign, after which all signs are forced. The $N^{n_1}$ and $N^{n_k}$ in the third term are from the prefix and suffix of $w$ of constant sign, on which there are no restrictions, and the $M_j$ represent the number of choices for the other subwords of constant sign. We now bound the third term of (\ref{firststep}) from above.
\begin{multline}\label{secondstep}
\sum_{k=3}^{n} \sum_{\substack{n_1, \ldots, n_k\\ \sum n_i = n}} 2 N^{n_1} \left(\prod_{j=2}^{k-1} M_j\right) N^{n_k} = 
2N^n \sum_{k=3}^{n} \sum_{\substack{n_1, \ldots, n_k\\ \sum n_i = n}} \prod_{j=2}^{k-1} (M_j N^{-j}) = \\
2N^n \sum_{k=3}^{n} \sum_{\substack{n_2, \ldots, n_{k-1}\\ \sum n_i < n}} \left(n - \sum_{j=2}^{k-1} n_j\right) \prod_{j=2}^{k-1} (M_j N^{-j}) \leq
2nN^n \sum_{k=3}^{n} \sum_{\substack{n_2, \ldots, n_{k-1}\\ \sum n_i < n}} \prod_{j=2}^{k-1} (M_j N^{-j}) \leq \\
2nN^n \sum_{k=3}^{n} \left( \sum_{t=1}^{\infty} M_t N^{-t} \right)^{k-2}.
\end{multline}

We now bound $\sum_{t=1}^{\infty} M_t N^{-t}$ by using the bounds of Lemmas~\ref{codinglemma2} and~\ref{codinglemma2.5}:

\begin{multline*}
\sum_{t=1}^{\infty} M_t N^{-t} = \frac{1}{N} + \sum_{t=2}^{\ell} |V_{\{1,\ldots,N\},t}| N^{-t} + \sum_{t=\ell + 1}^{\infty} |U_{\{1,\ldots,N\},t}| N^{-t} \leq \\
N^{-1} + \sum_{t=2}^{\ell} N^{-2} + \sum_{t=\ell + 1}^{\infty} 16t^{-2} \leq N^{-1} + (\ell - 1) N^{-2} + 16\ell^{-1}.
\end{multline*}

Choose $N$ and $\ell$ to be any pair for which this expression is less than $1$, for instance $N = 10$ and $\ell = 32$ (then 
$N^{-1} + (\ell - 1) N^{-2} + 16\ell^{-1} = 0.1 + 0.31 + 0.5 = 0.91 < 1$), and denote $\sum_{t=1}^{\infty} M_t N^{-t}$ by $\alpha < 1$. Then, 
(\ref{firststep}) and (\ref{secondstep}) imply that 
\[
|\mathcal{L}_n(X)| \leq 2N^n + 2(n-1)N^n + 2nN^n \sum_{k=3}^{\infty} \alpha^{k-2} = \frac{2n}{1-\alpha} N^n.
\]
This clearly implies that $h(X) \leq \ln N$, and so that $h(X) = \ln N$ since $X$ contains the full shifts on the $N$ positive and $N$ negative letters, each of which has topological entropy $\ln N$. As noted earlier, this completes the proof of Theorem~\ref{mainex2}.

\end{proof}

For the proof of Theorem~\ref{mainthm2}, we require one more lemma related to coding theory.

\begin{lemma}\label{codinglemma3}
For every alphabet $A$, positive integer $n$, and set $W \subset A^n$, there exists a set $S \subset W$ such that $|S| \geq \frac{|W|}{4n|A|^2}$ and $W$ is $3$-separated with respect to the Hamming distance $d$, i.e. for all 
$w, w' \in W$, $d(w,w') \geq 3$.
\end{lemma}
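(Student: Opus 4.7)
The plan is to produce a partition of $A^n$ into at most $4n|A|^2$ classes, each of which is $3$-separated in the Hamming distance; after that, the pigeonhole principle applied to the partition of $W$ immediately yields a subset $S$ of the required size. The partition will be built from the cosets of a $p$-ary Hamming-type code, where $p$ is a prime chosen to be close to $|A|$.

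First I would use Bertrand's postulate to pick a prime $p$ with $|A| \leq p < 2|A|$ (the case $|A|=1$ is trivial, since then $|W| \leq 1$) and embed $A$ into $\mathbb{F}_p = \{0,1,\ldots,p-1\}$. Let $m$ be the smallest positive integer with $p^m \geq n(p-1)+1$. Since $\mathbb{P}^{m-1}(\mathbb{F}_p)$ has exactly $(p^m-1)/(p-1) \geq n$ points, I can choose $n$ pairwise non-proportional nonzero vectors $h_1,\ldots,h_n \in \mathbb{F}_p^m$ and form the $m \times n$ parity-check matrix $H$ having these as its columns. Set $C := \ker H \subseteq \mathbb{F}_p^n$. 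Any nonzero $c \in C$ of Hamming weight $1$ or $2$ would force one or two of the columns $h_i$ to be linearly dependent, contradicting their choice; hence $C$ has minimum Hamming distance at least $3$.

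Next I would consider the $p^m$ cosets of $C$ in $\mathbb{F}_p^n$. These partition $\mathbb{F}_p^n$, and therefore partition $A^n$ as well. Any two distinct elements of the same coset differ by a nonzero element of $C$, so they lie at Hamming distance at least $3$, which means each coset is $3$-separated. By the minimality of $m$, $p^{m-1} < n(p-1)+1 \leq np$, so $p^m < np^2 \leq n(2|A|)^2 = 4n|A|^2$. Restricting the partition of $A^n$ to $W$ and applying pigeonhole produces a coset $C+v$ with $|W \cap (C+v)| \geq |W|/p^m \geq |W|/(4n|A|^2)$, and setting $S := W \cap (C+v)$ finishes the argument.

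The main obstacle is carrying out the Hamming-code construction when the alphabet $A$ itself lacks the structure of a field; this is handled by passing to the slightly larger prime modulus $p$, at the cost of at most a factor of $4$ in the final bound. Once the field structure is in place, the argument is essentially a clean pigeonhole on the standard coset partition, so I expect no surprises apart from the boundary cases (very small $n$ or $|A|$), which can be verified directly.
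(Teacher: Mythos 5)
Your proof is correct, but it builds the $3$-separated classes differently from the paper. Both arguments are, at heart, a pigeonhole over a partition of $A^n$ into at most $4n|A|^2$ classes each $3$-separated in Hamming distance (and both prove the intended claim that the selected subset $S$, not $W$, is $3$-separated). The paper, however, stays entirely over the integer alphabet $\{0,\ldots,|A|-1\}$ and partitions by two scalar checksums, $\sum_k w_k \bmod 2|A|$ and $\sum_k k w_k \bmod 2|A|n$: a one- or two-letter change moves each sum by less than its modulus, and if both sums are preserved under a two-letter change one solves a little two-by-two system to force the two letters to be unchanged. This gives exactly $2|A|\cdot 2|A|n = 4n|A|^2$ classes with no external input. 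You instead import field structure via Bertrand's postulate (a prime $|A|\leq p<2|A|$), take cosets of a distance-$3$ $p$-ary Hamming-type code, and bound the coset count by $p^m < np^2 < 4n|A|^2$ using the minimality of $m$; this is a clean, standard coding-theoretic route that generalizes easily (e.g.\ to larger separation via BCH-type codes) at the cost of invoking the prime-gap fact and losing a constant factor in passing from $|A|$ to $p$. Two minor points, neither a gap: the number of cosets of $C=\ker H$ is $p^{\operatorname{rank}H}$, which may be smaller than $p^m$ if your chosen columns do not span $\mathbb{F}_p^m$ --- but only the upper bound $\leq p^m$ is needed (or choose the columns to contain a basis, which is possible since $n\geq m$); and for $m=1$ the inequality $p^{m-1}<n(p-1)+1$ holds trivially rather than by minimality, so the bound is fine in all cases.
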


\begin{proof}

Choose any $A$ and $n$, and again assume without loss of generality that $A = \{0, \ldots, |A|-1\}$. Then, for any $i \in [0,2|A|)$ and $j \in [0,2|A|n)$, define $S_{n,i,j} = \{w = w_1 \ldots w_n \in W \ : \sum_{k=1}^n w_k = i \pmod{2|A|}, \sum_{k=1}^n k w_k = j \pmod{2|A|n}\}$. We claim that each $S_{n,i,j}$ is $3$-separated with respect to the Hamming distance. It is obvious that changing a single letter of a word in $S_{n,i,j}$ cannot yield another word in $S_{n,i,j}$ since changing a single letter must change the sum $\sum_{k=1}^n w_k \pmod{2|A|}$. Suppose for a contradiction that there exist $w \neq w' \in S_{n,i,j}$ differing on exactly two letters. Then $\sum_{k=1}^n w_k$ and $\sum_{k=1}^n w'_k$ are both equal to $i \pmod{2|A|}$, and differ by at most $2(|A|-1)$, and so must be equal. Similarly, $\sum_{k=1}^n k w_k$ and $\sum_{k=1}^n k w'_k$ are both equal to $j \pmod{2|A|n}$, and differ by at most $n(|A|-1) + (n-1)(|A| - 1)$, and are therefore also equal. But recall that $w$ and $w'$ differ on exactly two letters, say those indexed by $s$ and $t$. Then, $w_s + w_t = w'_s + w'_t$ and 
$sw_s + tw_t = sw'_s + tw'_t$, which implies that $w_s = w'_s$ and $w_t = w'_t$, a contradiction. We have then shown that each $S_{n,i,j}$ is $3$-separated.

We now note that the sets $S_{n,i,j}$ clearly partition $W$, and that there are $4n|A|^2$ choices for the pair $i,j$. Therefore, one of the $S_{n,i,j}$ has cardinality at least $\frac{|W|}{4n|A|^2}$; define $S$ to be that set.

\end{proof}

\begin{proof}[Proof of Theorem~\ref{mainthm2}]

Suppose for a contradiction that $X$ is a subshift with almost specification with gap function $g(n) = 1$ and two measures of maximal entropy $\mu, \nu$ with disjoint supports. For every $n$, as we did in Theorem~\ref{mainthm}, again define $\mathcal{M}_n(X) = \{w \in \mathcal{L}_n(X) \ : \ \mu([w]) > 0\}$ and $\mathcal{N}_n(X) = \{w \in \mathcal{L}_n(X) \ : \ \nu([w]) > 0\}$. We again note that by Lemma~\ref{supportcount}, $|\mathcal{M}_n(X)| \geq e^{n h(\mu)} = e^{n h(X)}$ and $|\mathcal{N}_n(X)| \geq e^{n h(\mu)} = e^{n h(X)}$ for all $n$, and that there exists $N$ so that $\mathcal{M}_n(X) \cap \mathcal{N}_n(X) = \varnothing$ for all $n \geq N$. 

For every $n$, we use Lemma~\ref{codinglemma3} to define sets $\mathcal{M}'_n \subseteq \mathcal{M}_n(X)$ and $\mathcal{N}'_n \subseteq \mathcal{N}_n(X)$ which are $3$-separated in the Hamming distance and for which $|\mathcal{M}'_n|, |\mathcal{N}'_n| \geq \frac{e^{n h(X)}}{4n|A|^2}$. We make the notation $S_n := \min(|\mathcal{M}'_n|, |\mathcal{N}'_n|)$. We now proceed somewhat as in the proof of Theorem~\ref{mainthm}, in that we will make many words in $\mathcal{L}_n(X)$ by using almost specification to nearly concatenate words in $\mathcal{M}'_j$ and $\mathcal{N}'_j$ for various $j < n$. The main difference is that rather than concatenating only two words, we now will need arbitrarily many. First, we choose $t$ such that
\begin{equation}\label{morethan1}
\sum_{j=1}^t \frac{1}{12iN|A|^2} > 1,
\end{equation}
and we denote this sum by $\alpha$. 

Now, we choose any $n > 3tN$ and create words in $\mathcal{L}_n(X)$ in the following way: define $k = \lfloor n/3tN \rfloor$, and define any 
$n_i \in [1,t]$ for $1 \leq i \leq k$. We note that $k > n/6tN$ since $n > 3tN$. Then, choose any words $w_1 \in \mathcal{M}'_{3Nn_1}$, $w_2 \in \mathcal{N}'_{3Nn_2}$, and so on, alternating between the sets, until finishing with $w_k$ in either $\mathcal{M}'_{3Nn_k}$ or $\mathcal{N}'_{3Nn_k}$, depending on whether $k$ is odd or even, respectively. Finally, choose $w_{k+1}$ in either $\mathcal{N}'_{n - \sum_{i=1}^k 3Nn_i}$ or $\mathcal{M}'_{n - \sum_{i=1}^k 3Nn_i}$, whichever is the opposite of what was chosen for $w_k$. For whichever words were chosen, use the assumed almost specification of $X$ with $g(n) = 1$ to make a word $f(w_1, \ldots, w_{k+1}) = v_1 v_2 \ldots v_{k+1} \in \mathcal{L}_n(X)$, where each $v_i$ differs from $w_i$ on at most one letter. 

We claim that this operation is injective, i.e. $f(w_1, \ldots, w_{k+1}) \neq f(w'_1, \ldots, w'_{k+1})$ unless $w_i = w'_i$ for
$1 \leq i \leq k$. Assume for a contradiction that $k$-tuples $(w_i)$ and $(w'_i)$ exist for which $f(w_1, \ldots, w_{k+1}) = f(w'_1, \ldots, w'_{k+1})$. There are two cases. 

The first case is where $n_i = n'_i$ for all $i$. Then, there exists $j$ so that $w_j \neq w'_j$. Also, since $f(w_1, \ldots, w_{k+1}) = f(w'_1, \ldots, w'_{k+1})$, both $w_j$ and $w'_j$ become the same word $v$ with at most one changed letter in each. Since $v$ and $w_j$ differ on at most one letter and $v$ and $w'_j$ differ on at most one letter, $w_j$ and $w'_j$ differ on at most two letters. This contradicts the $3$-separated property of $\mathcal{M}'_{n_j}$ and $\mathcal{N}'_{n_j}$. 

The second case is where $n_j \neq n'_j$ for some $j$. Choose $j$ minimal so that $n_j \neq n'_j$, and assume without loss of generality that $n_j < n'_j$. Then since all $w_i$ and $w'_i$ have lengths which are multiples of $3N$, the subwords $u$ and $u'$ of $w_1 w_2 \ldots w_{k+1}$ and $w'_1 w'_2 \ldots w'_{k+1}$ respectively of length $3N$ beginning at index $\sum_{i=1}^j 3N n_i + 1$ are subwords of $w_{j+1}$ and $w'_j$ respectively. Then either $u \in \mathcal{M}'_{n_{j+1}}$ and $u' \in \mathcal{N}'_{n'_j}$ or $u \in \mathcal{N}'_{n_{j+1}}$ and $u' \in \mathcal{M}'_{n'_j}$. Also, since $f(w_1, \ldots, w_{k+1}) = f(w'_1, \ldots, w'_{k+1})$, $u$ and $u'$ become the same word $u''$ after making at most one change to each. Since $|u''| = 3N$, $u''$ must contain a subword of length at least $N$ which was unchanged in both $u$ and $u'$, and therefore is a subword of each, a contradiction since no word in $\bigcup_i \mathcal{M}_i(X)$ can share an $N$-letter subword with a word in $\bigcup_i \mathcal{N}_i(X)$.

We have shown that $f$ is one-to-one, and so generates at least 
\begin{equation*}
\sum_{\substack{n_1, \ldots, n_k\\ 1 \leq n_i \leq t}} \left(\prod_{i=1}^k S_{3Nn_i}\right) S_{n - \sum_{i=1}^k 3Nn_i}
\end{equation*}
words in $\mathcal{L}_{n}(X)$. Then, by Lemma~\ref{codinglemma3},
\begin{multline*}
\sum_{\substack{n_1, \ldots, n_k\\ 1 \leq n_i \leq t}} \left(\prod_{i=1}^k S_{3Nn_i}\right) S_{n - \sum_{i=1}^k 3Nn_i} \geq \\
\sum_{\substack{n_1, \ldots, n_k\\ 1 \leq n_i \leq t}} \left(\prod_{i=1}^k \frac{e^{3N n_i h(X)}}{12N|A|^2 n_i} \right) 
\frac{e^{(n - \sum 3Nn_i) h(X)}}{4|A|^2 (n - \sum 3Nn_i )}  
\geq \frac{e^{nh(X)}}{4n|A|^2} \sum_{\substack{n_1, \ldots, n_k\\ 1 \leq n_i \leq t}} \prod_{i=1}^k \frac{1}{12N|A|^2 n_i} \\
= \frac{e^{nh(X)}}{4n|A|^2} \left(\sum_{j=1}^t \frac{1}{12N|A|^2 j}\right)^k \geq \frac{e^{nh(X)}}{4n|A|^2} \alpha^{n/6tN}.
\end{multline*}
Therefore, $\mathcal{L}_n(X) \geq (e^{nh(X)}/(4n|A|^2)) \alpha^{n/6tN}$ for all $n > 3tN$. However, taking logarithms, dividing by $n$, and letting $n \rightarrow \infty$ would imply that $h(X) \geq h(X) + \frac{1}{6tN} \ln \alpha$, a contradiction since $\alpha > 1$ by (\ref{morethan1}). Therefore, our original assumption was false, and
measures of maximal entropy $\mu$ and $\nu$ on $X$ with disjoint supports cannot exist, completing the proof.

\end{proof}

\bibliographystyle{plain}
\bibliography{weakspec}

\end{document}